\documentclass[12pt]{article}
\usepackage{amssymb,amsmath, amsthm}
\usepackage{color,xcolor}
\newcommand{\eb}{\begin{equation}}
\newcommand{\ee}{\end{equation}}
\newcommand{\ebx}{\begin{equation*}}
\newcommand{\eex}{\end{equation*}}
\newtheorem{lemma}{Lemma}[section]
\newtheorem{proposition}[lemma]{Proposition}
\newtheorem{theorem}[lemma]{Theorem}

\newtheorem{corollary}[lemma]{Corollary}
\newtheorem{definition}[lemma]{Definition}
\newtheorem{remark}[lemma]{Remark}
\newtheorem{example}[lemma]{Example}

\allowdisplaybreaks

\makeatletter
\renewcommand*\env@matrix[1][*\c@MaxMatrixCols c]{%
  \hskip -\arraycolsep
  \let\@ifnextchar\new@ifnextchar
  \array{#1}}
\makeatother

\begin{document}

\title{Gap phenomenon of holomorphic maps between Shilov boundaries of type-I bounded symmetric domains}
\author{Yun Gao\thanks{School of Mathematical Sciences, Shanghai Jiao Tong University, Shanghai,
		People's Republic of China. \textbf{Email:}~gaoyunmath@sjtu.edu.cn.
		Supported by NFSC, No.12471042 and No.12471078}}

\maketitle

\begin{abstract}
Motivated by the gap phenomenon for proper holomorphic maps between complex unit balls, this paper investigates smooth CR maps $f$ from an open piece $M$ of the Shilov boundary of the unit ball $B^s$ into the Shilov boundary $S_{r',s'}$ of a higher-rank Type I bounded symmetric domain $\Omega_{r',s'}$. To the best of our knowledge, this is the first work to systematically establish a general gap phenomenon in the higher-rank setting. Our main result demonstrates that when the signature difference $s'-r'$ falls into the interval $k(s-1) \le s'-r' < (k+1)(s-1)$ for some integer $k$, the non-trivial component of $f$ is constrained to a much smaller Type I boundary. Up to automorphisms of the domain and target spaces, $f$ decomposes into the block-diagonal form
$$f(z) = \begin{pmatrix} I_{r'-k} & 0 \\ 0 & \phi(z) \end{pmatrix},$$
where the essential component $\phi: M \to S_{k, s'-r'+k}$ is a smooth CR map into the corresponding Shilov boundary. We provide explicit constructions showing that these dimensional bounds are sharp. As an immediate corollary, in the initial gap regime $s-1 \le s' - r' < 2s-2$, the map $f$ reduces to the standard linear embedding.
\end{abstract}

{\bf Mathematics Subject Classification:} 32H02, 32V30, 32M15

\section{Introduction}

The rigidity of holomorphic and Cauchy-Riemann (CR) maps between bounded symmetric domains is a classical topic in several complex variables. While the rank 1 case (complex unit balls) is deeply understood from Poincaré to the seminal works of Cima-Suffridge \cite{CS},D'Angelo \cite{Da1, Da2}, Faran \cite{Fa}, Forstnerič \cite{F1, F2}, Huang \cite{Hu1, Hu2}, Huang-Ji \cite{HJ} and others, rigidity phenomena in higher-rank and mixed-rank settings remain largely open.

A celebrated feature of the rank $1$ setting is the \textit{gap phenomenon}. For instance, Faran \cite{Fa} proved that for target dimensions $N \in [n+1, 2n-2]$, any rational proper map from ${B}^n$ to ${B}^N$ is equivalent to the standard linear embedding. Subsequent works by Huang-Ji-Xu \cite{hjx} and Huang-Ji-Yin \cite{HJY} discovered that similar gap phenomena recur over higher intervals, culminating in the general Gap Conjecture \cite{HJY2}.

In contrast to the rank $1$ setting, mappings into higher-rank bounded symmetric domains pose significant challenges. When both domains have rank at least $2$ (i.e., $2 \le \text{rank}(\Omega) \le \text{rank}(\Omega')$), substantial structural rigidity has been established. For comprehensive results in this direction, we refer the reader to the works of Chan \cite{Ch1, Ch2}, Henkin-Novikov \cite{HN}, Kim \cite{Kim1}, Kim-Mok-Seo \cite{KMS}, Kim-Zaitsev \cite{KZ13, KZ15}, Mok \cite{Mok}, Mok-Tsai \cite{MT}, Mok-Ng-Tu \cite{MNT}, Ng \cite{Ng1, Ng2, Ng3}, Seo \cite{Seo1, Seo2, Seo3}, and Tu \cite{Tu1, Tu2}, to name a few.

From a broader geometric perspective, it is natural to expect that systematic gap phenomena are not exclusive to rank $1$ domains, but should universally exist for mappings between bounded symmetric domains of arbitrary ranks. As a first step toward uncovering such a universal gap theory, it is strategically crucial to understand the transition from rank $1$ to higher-rank spaces. However, this mixed-rank case, specifically, mappings from rank $1$ complex balls into higher-rank Type I domains resist classical approaches. Traditional tools like the Tanaka-Chern-Moser theory \cite{CM, Ta} and the Cartan moving frame method do not easily adapt to this mixed-rank setting.

Recently, important progress has been made by Kim and Zaitsev \cite{KZ13}, and Kim \cite{Kim2} in studying the rigidities of local CR mappings between Shilov boundaries of Type I bounded symmetric domains. Notably, Kim \cite{Kim2} studied nonconstant smooth CR maps from the sphere $S_{1,s}$ to $S_{r',s'}$, revealing a specific initial gap interval $[2s-2, 3s-4]$. Despite this breakthrough, whether a systematic, multi-interval gap phenomenon analogous to the celebrated Gap Conjecture in the rank $1$ setting exists for higher-rank bounded symmetric domains has remained unknown. In this paper, we provide the first systematic investigation of this problem and establish the general, sharp gap intervals for CR maps from rank $1$ Type I Shilov boundaries to higher-rank Type I Shilov boundaries.

To overcome the analytical difficulties of the mixed-rank setting, we introduce a geometric framework. Let $r, s \in \mathbb{N}$ with $r \le s$, and let $\mathbb{C}^{r,s}$ denote the complex vector space $\mathbb{C}^{r+s}$ equipped with the indefinite inner product $\langle \cdot, \cdot \rangle_{r,s}$ of signature $(r,s)$. A linear subspace in $\mathbb{C}^{r,s}$ is called positive (resp. null) if $\langle\cdot,\cdot\rangle_{r,s}$ restricts to a positive-definite (resp. null) form on it. The Type I domain $\Omega_{r,s}$ parametrizes such positive $r$-planes, while its Shilov boundary $S_{r,s} := S(\Omega_{r,s})$ naturally corresponds to the null $r$-planes. 

Instead of relying on PDE computations on the boundary manifolds, we consider the ambient Grassmannian variety $G(r,r+s)$ equipped with an orthogonal structure induced by $\langle\cdot,\cdot \rangle_{r,s}$. Specifically, two points $z, w \in G(r,r+s)$ are defined to be orthogonal (denoted by $z \perp w$) if their corresponding subspaces are orthogonal in $\mathbb{C}^{r,s}$. We denote this orthogonal Grassmannian by $\mathcal{G}(r,s)$. 

Within this geometric framework, our primary approach is to investigate local holomorphic maps $F: U \to \mathcal{G}(r',s')$ where $U \subseteq \mathcal{G}(r,s)$ is a connected open set that preserve this orthogonal structure that is, $F(z) \perp F(w)$ for any $z, w \in U$ satisfying $z \perp w$. We refer to such mappings as \emph{orthogonal maps}. Crucially, when the domain $U$ contains a null point, these local orthogonal maps between Grassmannians correspond precisely to local smooth CR mappings between the Shilov boundaries via standard holomorphic extension (as we will detail in Proposition \ref{orthShilov}). 

By leveraging this purely algebraic and geometric correspondence, we establish the following general sharp gap intervals for Type I domains:

\begin{theorem}\label{main_shilov}
	Let $s \ge 3$, and let $r', s' \in \mathbb{N}$ and $k \in \mathbb{Z}_{\ge 0}$ with $2 \le r' \le s'$ and $k < r'$. Let $f$ be a smooth CR mapping between connected open pieces of the Shilov boundaries of two Type I bounded symmetric domains $\Omega_{1,s}$ and $\Omega_{r',s'}$. If the signatures satisfy the gap condition
	$$k(s-1) \le s' - r' < (k+1)(s-1),$$
	then after composing with suitable automorphisms of $\Omega_{1,s}$ and $\Omega_{r',s'}$,  $f$ is given by:
	$$f(z) = \begin{pmatrix} I_{r'-k} & 0 \\ 0 & \phi(z) \end{pmatrix}$$
	where $\phi$ is a smooth CR map between the corresponding Shilov boundaries of two Type I bounded symmetric domains $\Omega_{1,s}$ and $\Omega_{k,s'-r'+k}$

\end{theorem}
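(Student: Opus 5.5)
We translate the problem into the orthogonal-Grassmannian language. By Proposition \ref{orthShilov}, $f$ extends to a local holomorphic orthogonal map $F: U \to \mathcal{G}(r',s')$, where $U\subseteq \mathcal{G}(1,s)=\mathbb{P}^{s}$ is a connected open set containing a null point. Thus $F(z)\perp F(w)$ whenever $z\perp w$. The goal becomes a linear-algebraic statement: there is a fixed $(r'-k)$-dimensional positive subspace $P\subset\mathbb{C}^{r',s'}$ with $P\subseteq F(z)$ for all $z\in U$. Then $F(z)=P\oplus \Phi(z)$, with $\Phi(z)$ a $k$-plane in $P^\perp\cong\mathbb{C}^{k,s'-r'+k}$. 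Moreover, $\Phi$ is again orthogonal, since $P\perp P^\perp$ and orthogonality of the $F(z)$ forces orthogonality of the complements $\Phi(z)$. Choosing adapted bases of $\mathbb{C}^{r',s'}$ then gives the block form, and restricting $\Phi$ to the null locus gives the CR map $\phi$ into $S_{k,s'-r'+k}$.

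To find $P$, we use the geometry of orthogonality in $\mathbb{P}^s$. For $z\in U$, the points $w\in U$ with $w\perp z$ form an open piece of a hyperplane $z^\perp\cong\mathbb{P}^{s-1}$. Set $V_z:=\mathrm{span}\{F(w): w\in U,\, w\perp z\}$. By orthogonality $V_z\perp F(z)$. Next, take $s+1$ mutually orthogonal points $z_0,\dots,z_s$ in $U$; this is possible generically, since $U$ is open and orthogonal frames of $\mathbb{C}^{1,s}$ form an open condition after small perturbation. The subspaces $F(z_i)$ are then mutually orthogonal $r'$-planes in $\mathbb{C}^{r'+s'}$. Counting signatures, the sum of their dimensions $(s+1)r'$ exceeds $r'+s'$ unless there is large overlap or degeneracy. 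Hence the $F(z_i)$ must share a common isotropic or common subspace part.

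The key quantity is the dimension of the span $W$ of all $F(z)$, $z\in U$, together with the radical of the form restricted to it. We aim to show the following. Write $F(z)=A\oplus B(z)$, where $A=\bigcap_z F(z)$. Differentiating the orthogonality relation along the hyperplanes $z^\perp$ shows that each "moving part" of $F$ contributes at least $s-1$ to the dimension of the negative part of $\mathbb{C}^{r',s'}$, because of an injectivity argument on the second fundamental form restricted to $z^\perp\cong\mathbb{P}^{s-1}$. We would therefore prove by induction on $j$ that if $\dim F(z)/A\ge j+1$ then $s'-r'\ge (j+1)(s-1)$, essentially decomposing the moving part into $j+1$ pieces each of which, restricted to $z^\perp$, behaves like a nonconstant orthogonal map from $\mathbb{P}^{s-1}$, using the rank-one input that such a map needs $s-1$ extra negative directions. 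With $s'-r'<(k+1)(s-1)$ this gives $\dim A\ge r'-k$. Since $A\subset F(z)$ for $z$ near a null point while $A\perp A$ would fail unless..., we must also check that $A$ is positive (equivalently, not null). This holds because $A\subseteq F(z)$ for non-null $z$ as well, and $F(z)$ is positive for $z$ in the positive region (the map sends $\Omega_{1,s}$ to $\Omega_{r',s'}$ locally). Any $(r'-k)$-dimensional subspace of $A$ is then the desired $P$.

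The main obstacle is the counting step: showing that each independent moving direction of $F$ consumes $s-1$ negative dimensions. We would first attempt it by fixing a generic $z_0$ and considering the holomorphic family $w\mapsto F(w)$ on $z_0^\perp$, which takes values in $F(z_0)^\perp$ of signature $(0,s'-r')$ after quotienting by $A$. We would then apply a linear-independence/rank argument to the derivatives $dF|_{z_0^\perp}$, which live in $\mathrm{Hom}(T,F(z_0)^\perp)$ with $\dim T=s-1$. The lower bound $k(s-1)\le s'-r'$ is needed only for sharpness, not for the proof.
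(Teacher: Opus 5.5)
There is a genuine gap, and it starts with the object you are looking for. A fixed \emph{positive} $(r'-k)$-plane $P\subseteq F(z)$ for all $z\in U$ cannot exist. The set $U$ contains a null point $z_0$, and $F(z_0)$ lies over $S_{r',s'}$, so it is a totally null $r'$-plane and every subspace of it is null. More generally, since $F(z)\perp F(w)$ whenever $z\perp w$, any subspace contained in all $F(z)$ is orthogonal to itself (cf.\ Proposition~\ref{intersection}). The conclusion of the theorem says the same thing: the rows of $[I_{r'},f(z)]$ coming from the block $I_{r'-k}$ are $e_i+e_{r'+i}$, which are null vectors.

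Your justification, that $F$ sends positive points to positive points, fails exactly for the maps in the conclusion. For $Z=\left(\begin{smallmatrix} I_{r'-k} & 0\\ 0 & \phi\end{smallmatrix}\right)$ the matrix $I_{r'}-ZZ^H$ has a zero $(r'-k)\times(r'-k)$ block. Theorem~\ref{thm:cr_extension_local} only gives $F(B^s)\subset\bar\Omega_{r',s'}$. Your signature bookkeeping is also off: a positive $(r'-k)$-plane has orthogonal complement of signature $(k,s')$, not $(k,s'-r'+k)$. The correct reduction uses a null $N$ of dimension $r'-k$ with $N\subseteq F(z)\subseteq N^\perp$, so that $N^\perp/N$ has signature $(k,s'-r'+k)$. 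After moving $N$ to the span of $e_i+e_{r'+i}$, $i\le r'-k$, these two inclusions force the block form directly in the chart $[I_{r'},Z]$.

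Second, the step that carries all the content is only announced, not proved. That step is the claim that each moving direction of $F$ costs $s-1$ negative dimensions, via a second fundamental form argument and induction on $j$. As stated it is essentially a restatement of the theorem. The idea you are missing is that overlaps between images of mutually orthogonal points are confined to radicals.

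Let $N_z:=V_{F(z)}\cap (V_{F(z)})^\perp$, with generic (minimal) dimension $m$, and take $s+1$ mutually orthogonal generic points $z_i$. Then $V_{F(z_i)}\cap\sum_{j\neq i}V_{F(z_j)}\subseteq N_{z_i}$, so $\sum_i V_{F(z_i)}$ has dimension at least $(s+1)r'-sm$. On the other hand, this sum lies in $N_{z_1}^\perp$, which has dimension $r'+s'-m$. Hence $(s-1)m\ge sr'-s'$, and $s'-r'<(k+1)(s-1)$ gives $m\ge r'-k$. You had the $s+1$ orthogonal points, but without the radical containment the naive count gives nothing.

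Finally, semi-positivity, not positivity, is what freezes the radical. For a positive generic $z$, $(V_{F(z)})^\perp$ is negative semidefinite, so its null vectors lie in $N_z$. Thus $w\perp z$ gives $N_w\subseteq N_z$, and hence $N_w=N_z$ by minimality of $m$. Comparing two nearby positive generic points through a common orthogonal point then shows $N_z$ is a fixed null subspace $N$ contained in every $F(z)$.

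A minor point: the holomorphic extension of $f$ comes from Theorem~\ref{thm:cr_extension_local}. Proposition~\ref{orthShilov} only turns null-preservation of an already holomorphic map into orthogonality.
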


As a direct consequence of Theorem \ref{main_shilov}, we obtain the following non-existence result for special proper maps:

\begin{corollary}\label{cor:non_existence}
	Let $s \ge 3$, and let $r', s', k \in \mathbb{N}$ with $2 \le r' \le s'$ and $k < r'$. If the signatures satisfy 
	$$k(s-1) \le s' - r' < (k+1)(s-1),$$
	then there does not exist any proper holomorphic map from $B^s$ to $\Omega_{r',s'}$ that extends smoothly up to the boundary and preserves the Shilov boundaries.
\end{corollary}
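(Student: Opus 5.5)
The plan is to reduce the corollary to Theorem \ref{main_shilov} and then exploit the identity block, which forces the holomorphic extension to stay on the topological boundary of $\Omega_{r',s'}$. Suppose, for contradiction, that $F\colon B^s\to\Omega_{r',s'}$ is proper and holomorphic, extends smoothly to $\overline{B^s}$, and maps the Shilov boundary $\partial B^s=S_{1,s}$ into $S_{r',s'}$. Here $B^s=\Omega_{1,s}$, and I use the realization
$$\Omega_{r',s'}=\{Z\in M_{r'\times s'}(\mathbb C): I_{r'}-ZZ^*>0\}.$$
Then $f:=F|_{S_{1,s}}$ is a smooth CR map between connected open pieces (indeed all) of the Shilov boundaries $S_{1,s}$ and $S_{r',s'}$. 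By hypothesis the signatures satisfy the gap condition with some $k<r'$. Theorem \ref{main_shilov} therefore gives automorphisms $\gamma\in\mathrm{Aut}(B^s)$ and $\Gamma\in\mathrm{Aut}(\Omega_{r',s'})$ with
$$\Gamma\circ f\circ\gamma=\begin{pmatrix} I_{r'-k}&0\\0&\phi\end{pmatrix}$$
on a piece of the sphere.

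The next step is to pass from the sphere to the interior. Set $\tilde F:=\Gamma\circ F\circ\gamma$. It is again proper and holomorphic from $B^s$ to $\Omega_{r',s'}$, since both automorphisms extend biholomorphically across the relevant boundary points and preserve the domains and the Shilov boundaries; strictly speaking $\gamma$, $\Gamma$ need only be defined near the relevant pieces, and that suffices below. Consider the holomorphic matrix-valued function $G(z)$ given by the upper-left $(r'-k)\times(r'-k)$ block of $\tilde F$ minus $I_{r'-k}$, together with the off-diagonal blocks of $\tilde F$. Then $G$ is holomorphic on $B^s$, continuous up to the boundary, and vanishes on an open piece of the sphere. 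An open piece of a strictly pseudoconvex hypersurface is a uniqueness set for such functions: for instance, by the boundary uniqueness theorem, or by writing $G$ as a holomorphic function near a boundary point after local holomorphic extension from one side, whose restriction to a generic real hypersurface vanishing forces vanishing. Hence $G\equiv0$, so
$$\tilde F(z)=\begin{pmatrix} I_{r'-k}&0\\0&\tilde\phi(z)\end{pmatrix}\quad\text{for all } z\in B^s.$$

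Finally, since $k<r'$ we have $r'-k\ge1$. So for any $z\in B^s$, the matrix $I_{r'}-\tilde F(z)\tilde F(z)^*$ has a zero diagonal entry in its first $r'-k$ rows, and hence is not positive definite. Thus $\tilde F(z)\notin\Omega_{r',s'}$, contradicting that $\tilde F$ maps $B^s$ into $\Omega_{r',s'}$. The degenerate case $k=0$, where $\tilde F\equiv I_{r'}$ is constant, is covered by the same argument.

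The only delicate point is the uniqueness step that transfers the block form from the CR map on a piece of the sphere to the holomorphic map on the ball. Smooth extension to the boundary makes this standard, for example via the Hopf lemma or reflection across the strictly pseudoconvex boundary, so I do not expect a real obstacle there.
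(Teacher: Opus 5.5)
Your proof is correct, and it is essentially the argument the paper intends when it calls this corollary a direct consequence of Theorem \ref{main_shilov}: the identity block $I_{r'-k}$ with $r'-k\ge 1$ makes $I_{r'}-\tilde F\tilde F^H$ degenerate, so $\tilde F$ cannot map into $\Omega_{r',s'}$. Your boundary-uniqueness step just makes explicit what the paper's proof of Theorem \ref{main_shilov} already obtains via the identity theorem, namely that the block form holds for the holomorphic extension on a neighborhood of $\bar B^s$; the step is most cleanly justified by restricting to complex lines through the origin and applying the one-variable uniqueness theorem on arcs.
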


Furthermore, as a direct consequence of the sharp bound when $k=1$, we obtain the following strong rigidity corollary:

\begin{corollary}\label{main2}
	Let $s \ge 2$, and let $r', s' \in \mathbb{N}$ with $2 \le r' \le s'$. Let $f$ be a smooth CR mapping between connected open pieces of the Shilov boundaries of two Type I bounded symmetric domains $\Omega_{1,s}$ and $\Omega_{r',s'}$.
	\begin{itemize}
		\item[(1)] If $s' - r' < s - 1$, then $f$ is a constant map.
		
		\item[(2)] If $s - 1 \le s' - r' < 2s - 2$, then after composing with suitable automorphisms of $\Omega_{1,s}$ and $\Omega_{r',s'}$, $f$ is given by the standard linear embedding:
		$$ z \mapsto \begin{pmatrix}
			I_{r'-1} & 0 & 0 \\
			0 & z & 0 
		\end{pmatrix}. $$
	\end{itemize}
\end{corollary}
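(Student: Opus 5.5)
We derive Corollary \ref{main2} from Theorem \ref{main_shilov}, treating the boundary cases $k=0$ and $k=1$ separately. The theorem assumes $s \ge 3$, but the corollary allows $s=2$. So we first handle $s\ge 3$ by direct specialization. Afterwards we explain how the same mechanism (Proposition \ref{orthShilov} together with the orthogonal-map reduction underlying the theorem) covers $s=2$.

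\textbf{Part (1).} Suppose $s'-r' < s-1$. Then the gap condition of Theorem \ref{main_shilov} holds with $k=0$, and $0<r'$. The theorem gives, after automorphisms, $f(z) = \begin{pmatrix} I_{r'} & 0\end{pmatrix}$ with no $\phi$-block, because $\Omega_{0,s'-r'}$ is degenerate and its Shilov boundary is a point. Hence $f$ is constant. To make this rigorous, we read the block form as an $r'\times s'$ matrix in the standard coordinates of $S_{r',s'}$. The $k=0$ case of the theorem says precisely that the image is a single null $r'$-plane, and composing back with the automorphisms preserves constancy.

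\textbf{Part (2).} Suppose $s-1\le s'-r'<2s-2$. Then $k=1<2\le r'$, and the theorem yields
$$f(z)=\begin{pmatrix} I_{r'-1} & 0\\ 0 & \phi(z)\end{pmatrix},$$
where $\phi$ is a smooth CR map from an open piece of $S_{1,s}=\partial B^s$ into $S_{1,s'-r'+1}=\partial B^{N}$ with $N=s'-r'+1$. The hypothesis gives $s\le N\le 2s-2$. Two cases arise.

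\emph{Case $\phi$ constant.} Part (1), applied in the rank-one target, should exclude this. If it is not excluded, we note that the theorem's normalization presupposes nondegeneracy. In that case we would instead apply the $k=0$ statement: a constant $f$ arises only when the $k=0$ condition holds, and that contradicts $s'-r'\ge s-1$. Making this precise is the delicate bookkeeping point of the argument. Either the theorem's proof shows that $\phi$ is nonconstant whenever $k\ge1$ is forced, or we must add the hypothesis that $f$ is nonconstant, which is implicit in the word ``standard embedding.''

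\emph{Case $\phi$ nonconstant.} Then $\phi$ is a local CR map between spheres $\partial B^s\to\partial B^N$ with $N\le 2s-2$. It extends holomorphically (Forstnerič), and by Faran's gap theorem (Huang's linearity for $N\le 2s-2$) it is equivalent to the linear map $z\mapsto(z,0)$. Absorbing the corresponding ball automorphisms into automorphisms of $\Omega_{r',s'}$ that act on the last block gives the stated form $\begin{pmatrix} I_{r'-1}&0&0\\0&z&0\end{pmatrix}$.

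\textbf{The case $s=2$.} Here $2s-2=s$, so the interval in (2) forces $s'-r'=1$ and $N=2=s$. A nonconstant local CR map $\partial B^2\to\partial B^2$ is an automorphism (Alexander/Pinchuk), so only the reduction to the block form needs checking. This is the main obstacle, since Theorem \ref{main_shilov} was stated only for $s\ge3$. We would try to rerun the orthogonal-map argument of the theorem for $k=0,1$, where only the lowest gap is needed. We expect the restriction $s\ge3$ to be used only for the higher gaps, and we would verify this by inspecting the step that splits off the identity block $I_{r'-k}$.
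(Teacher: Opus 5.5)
\paragraph{Overall.} For $s\ge 3$ your route is the paper's. You specialize Theorem \ref{main_shilov} to $k=0$, which leaves the constant matrix $(I_{r'}\ 0)$, and to $k=1$, and then apply rank-one rigidity to $\phi$. You cite Faran/Huang where the paper cites the Gao--Ng result, Theorem \ref{thm:ball}; this is the same input. Two points still need fixing: the constant case and the case $s=2$.

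\paragraph{The constant case.} Your treatment of a constant $\phi$ rests on a false claim. A constant map to any point of $S_{r',s'}$ is a smooth CR map for every signature. So ``a constant $f$ arises only when the $k=0$ condition holds'' is wrong: part (1) is an implication, not a characterization. Part (1) also cannot be applied to $\phi$, since its gap is $N-1=s'-r'\ge s-1$. No argument can show that $\phi$ is nonconstant. Statement (2) is false for constant $f$ and must be read with $f$ nonconstant, just as Theorem \ref{thm:ball} says ``linear or constant''. Once you commit to that hypothesis, $\phi$ is nonconstant, because $f$ and $\mathrm{diag}(I_{r'-1},\phi)$ differ only by automorphisms. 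Your Faran/Huang step then applies.

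\paragraph{The case $s=2$.} You leave this as an expectation, not an argument. You are right to flag it: the paper deduces the corollary directly from Theorem \ref{main_shilov}, which is stated only for $s\ge 3$, and adds nothing for $s=2$. It can be closed as follows.

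For (1), $s'=r'$ and $S_{r',r'}=U(r')$ is totally real in $M_{r',r'}$, since its tangent space at $I$ consists of skew-Hermitian matrices. So $df$ kills the complex tangent bundle of the sphere. That bundle bracket-generates, so $f$ is constant.

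For (2), $s'=r'+1$.
\begin{itemize}
\item The count in Lemma \ref{lem:unified_local_nullity} uses only $s-1>0$, provided the orthogonal frame is chosen generically so that all its points lie in $U^\circ$. It gives $m\ge r'-1$.
\item In Lemma \ref{fixed_null_space}, the only step sensitive to $s$ is the use of $(V_{z_1}\oplus V_{z_2})^\perp$, which is now a single point. Replace it as follows.
\item Fix a generic negative $w\in U$ near the null point. Every positive $z\in w^\perp\cap U^\circ$ satisfies $N_z=N_w$. In turn, every $w'\in z^\perp\cap U$ satisfies $N_{w'}=N_z$.
\item The set of $w'\in U$, $w'\ne w$, for which the point $(V_w\oplus V_{w'})^\perp$ is positive and lies in $U^\circ$ is open and nonempty.
\item Hence $N:=N_w$ lies in, and is orthogonal to, $V_{F(w')}$ on an open set. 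By the identity theorem this holds on all of $U$.
\end{itemize}
The block decomposition then follows as in the paper. After that, $\phi$ is a nonconstant local CR map $S^3\to S^3$, and Alexander's theorem finishes the argument, as you say.
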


It is worth noting that our proof of Theorem \ref{main_shilov} centers entirely on the orthogonality preservation property on the ambient Grassmannians and the dimensional capacity constraints deduced from it. This geometric perspective allows us to bypass the PDE computations relied upon in previous literature, offering a direct and conceptual approach to the problem.

\section{Orthogonal structure on Grassmannian}

In \cite{GN}, the authors introduced an orthogonal structure in projective space to solve rigidity problems for proper holomorphic mappings among generalized balls. Building upon this framework, this section generalizes the concept of orthogonal structures to Grassmannian manifolds.

Let $r,s,t \in \mathbb{N}$ and $n := r+s+t > 0$. We define the indefinite inner product of signature $(r;s;t)$ on $\mathbb{C}^{n}$ by:
$$
	\langle z, w\rangle_{r,s,t}
	= z_1\bar w_1 + \cdots + z_r\bar w_r - z_{r+1}\bar w_{r+1} - \cdots - z_{r+s}\bar w_{r+s},
$$
where $z=(z_1,\ldots,z_{n})$ and $w=(w_1,\ldots,w_{n})$. We also define the indefinite squared norm $\|z\|^2_{r,s,t} = \langle z, z\rangle_{r,s,t}$. A vector $z \in \mathbb{C}^{r,s,t}$ endowed with this inner product is called a \textit{positive vector} if $\|z\|^2_{r,s,t} > 0$, a \textit{negative vector} if $\|z\|^2_{r,s,t} < 0$, and a \textit{null vector} if $\|z\|^2_{r,s,t} = 0$. Two vectors $z, w$ are called orthogonal (denoted $z \perp w$) if $\langle z, w\rangle_{r,s,t} = 0$. The \textit{orthogonal complement} of $z$ is defined as
$$z^{\perp} = \{w \in \mathbb{C}^{r,s,t} \mid \langle z, w\rangle_{r,s,t} = 0\}.$$

\begin{remark}\label{rem:segre_connection}
	In the terminology of several complex variables, the orthogonal complement $z^{\perp}$ corresponds  to the \textit{Segre variety} associated with the point $z$. Specifically, for the real-analytic null cone defined by $\langle z, z \rangle_{r,s,t} = 0$, complexifying the Hermitian conjugate yields the Segre hypersurface $Q_z := \{w \in \mathbb{P}^{r,s,t} \mid \langle w, z \rangle_{r,s,t} = 0\} = z^\perp$. Under this correspondence, our orthogonality preservation condition $F(z) \perp F(w)$ is geometrically equivalent to the invariance of Segre varieties under the mapping $F$.
\end{remark}

We denote by $\mathbb{C}^{r,s,t}$ the space $\mathbb{C}^{n}$ equipped with the Hermitian inner product defined above, and we set $\mathbb{P}^{r,s,t} := \mathbb{P}\mathbb{C}^{r,s,t}$. For brevity, we write $\mathbb{C}^{r,s}$ and $\mathbb{P}^{r,s}$ instead of $\mathbb{C}^{r,s,0}$ and $\mathbb{P}^{r,s,0}$.

Consider a complex linear subspace $V \subset \mathbb{C}^{r,s,t}$ such that the restriction of $\langle\cdot,\cdot\rangle_{r,s,t}$ to $V$ has signature $(a; b; c)$; we refer to $V$ as an \textit{$(a,b,c)$-subspace} of $\mathbb{C}^{r,s,t}$. 

If $a = b = 0$, $V$ is called a \textit{null space}. Similarly, it is called a \textit{positive space} (resp. \textit{negative space}) if $b = c = 0$ (resp. $a = c = 0$). We will also use the terms \textit{null $k$-plane}, \textit{positive $k$-plane}, and \textit{negative $k$-plane} when $\dim V = k$. Obviously, the maximum dimension of a null space in $\mathbb{P}^{r,s}$ is $\min\{r,s\}-1$. 

Let $r, s$ be positive integers with $r \le s$. Consider the Grassmannian variety $G(r,r+s)$, whose closed points are in bijection with $r$-dimensional vector subspaces $V \subset \mathbb{C}^{r+s}$. Fixing the standard basis $(e_1, \ldots, e_{r+s})$ of $\mathbb{C}^{r+s}$, a linear subspace $V$ can be represented as the row space of an $r \times (r+s)$ matrix:
$$A = \left(\begin{array}{cccc}
	a_{11} & a_{12} & \cdots & a_{1,r+s} \\
    a_{21} & a_{22} & \cdots & a_{2,r+s} \\
	\vdots & \vdots & \ddots & \vdots \\
	a_{r1} & a_{r2} & \cdots & a_{r,r+s} \\
\end{array}\right),
$$
where the rows of $A$ encode a set of spanning vectors of $V$ with respect to the standard basis. Such a matrix $A$ is called a \textit{representative matrix} of $V$.

The matrix $A$ is not unique: left multiplication by any invertible $r \times r$ matrix $X$ yields another representative matrix $XA$ for the same subspace $V$. Conversely, if two matrices $A$ and $A'$ represent the same subspace, there exists an invertible $r \times r$ matrix $X$ such that $A' = XA$.

\textbf{Notation:} For a point $\mathcal Z \in G(r,r+s)$, let $V_{\mathcal Z} \subset \mathbb{C}^{r+s}$ denote the $r$-dimensional linear subspace corresponding to $\mathcal Z$, and let $A_{\mathcal Z}$ denote a representative matrix of $V_{\mathcal Z}$. If the row vectors $\alpha_1, \ldots, \alpha_r$ of $A_{\mathcal Z}$ are pairwise orthogonal with respect to the indefinite Hermitian inner product on $\mathbb{C}^{r,s}$ (i.e., $\alpha_i \perp \alpha_j$ in $\mathbb{C}^{r,s}$), then $A_{\mathcal Z} = (\alpha_1, \ldots, \alpha_r)^t$ is called an \textit{orthogonal representative matrix} for ${\mathcal Z}$. By equipping $\mathbb{C}^{r,s}$ with an indefinite inner product, we naturally induce an orthogonal structure on the Grassmannian $G(r,r+s)$. This structure categorizes subspaces based on their metric properties:

\begin{definition}
Let two points $\mathcal Z, \mathcal W \in G(r,r+s)$ be represented by $r \times (r+s)$ matrices $A_{\mathcal Z}$ and $A_{\mathcal W}$. Using the signature matrix 
$$ I_{r,s} = \begin{pmatrix} I_r & 0 \\ 0 & -I_s \end{pmatrix}, $$ 
we say ${\mathcal Z}$ is orthogonal to $\mathcal W$ (denoted $\mathcal Z \perp \mathcal W$) if and only if
\[
A_{\mathcal Z} I_{r,s} A_{\mathcal W}^H = 0,
\]
where $A^H = \bar{A}^t$ denotes the Hermitian transpose.

A point $\mathcal{Z} \in \mathcal{G}(r,s)$ is classified as:
\begin{itemize}
	\item[(1)] \emph{null} if $A_{\mathcal{Z}} I_{r,s} A_{\mathcal{Z}}^H = 0$,
	\item[(2)] \emph{positive} if $A_{\mathcal{Z}} I_{r,s} A_{\mathcal{Z}}^H > 0$,
	\item[(3)] \emph{semi-positive} if $A_{\mathcal{Z}} I_{r,s} A_{\mathcal{Z}}^H \ge 0$.
\end{itemize}
Here, $\mathcal{G}(r,s)$ denotes the Grassmannian $G(r,r+s)$ endowed with this orthogonal structure.
\end{definition}

It is clear that for any two points ${\mathcal Z}, {\mathcal W} \in \mathcal{G}(r,s)$,
$${\mathcal Z} \perp \mathcal W \iff V_{\mathcal Z} \subseteq (V_{\mathcal W})^\perp \subseteq \mathbb{C}^{r,s},$$ 
where $(V_{\mathcal W})^\perp$ denotes the orthogonal complement with respect to $\langle \cdot, \cdot \rangle_{r,s}$.

Hence, if $\mathcal Z \perp \mathcal W$ in $\mathcal{G}(r,s)$, then every vector $\alpha \in V_{\mathcal Z}$ is orthogonal to every vector $\beta \in V_{\mathcal W}$ with respect to $\langle \cdot, \cdot \rangle_{r,s}$. Furthermore, we observe that $\mathcal{G}(1,s)$ coincides with the projective space $\mathbb{P}^{1,s}$. 

Let $M_{r,s}$ denote the space of $r \times s$ complex matrices. The \emph{Type I irreducible bounded symmetric domain} $\Omega_{r,s} \subseteq M_{r,s} \cong \mathbb{C}^{r \times s}$ is defined by
\[
\Omega_{r,s} = \{ Z \in M_{r,s} \mid I_r - ZZ^H > 0 \}.
\]
Here, $> 0$ denotes the positive definiteness of a square matrix. This domain corresponds bijectively to the set of positive $r$-planes in $\mathbb{C}^{r,s}$, making $\Omega_{r,s}$ precisely the set of positive points in $\mathcal{G}(r,s)$. The \emph{Shilov boundary} of $\Omega_{r,s}$, given by
\[
S_{r,s} = \{ Z \in M_{r,s} \mid I_r = ZZ^H \},
\]
coincides with the set of null points in $\mathcal{G}(r,s)$. Moreover, if $Z \in S_{r,s}$, then $V_{\mathcal{Z}}$ is a maximal null subspace of $\mathbb{C}^{r,s}$ where $\mathcal{Z}=[I_r,Z]$.

\begin{definition}\label{orth}
Let $U \subseteq \mathcal{G}(r,s)$ be a connected open set containing a null point. A holomorphic map $F: U \to \mathcal{G}(r',s')$ is called \emph{orthogonal} if $F(\mathcal Z) \perp F(\mathcal W)$ for all $\mathcal Z, \mathcal W \in U$ such that $\mathcal Z \perp \mathcal W$. We also refer to $F$ as a \emph{local orthogonal map} from $\mathcal{G}(r,s)$ to $\mathcal{G}(r',s')$.
\end{definition}

From the definition of orthogonal maps and the properties of the Shilov boundary, we immediately derive the following result.

\begin{proposition}\label{orthShilov}
	Let $r,s > 0$ and $F: U \subset \mathcal{G}(r,s) \to \mathcal{G}(r',s')$ be a holomorphic map, where $U$ is a connected open set containing a null point. If $F$ maps null points to null points, there exists an open set $U' \subset U$ such that $F|_{U'}: U' \to \mathcal{G}(r',s')$ is an orthogonal map.
\end{proposition}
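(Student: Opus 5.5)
The plan is a standard complexification (Segre-variety) argument in affine coordinates. First I shrink $U$ to a coordinate chart around a null point $\mathcal Z_0$, say $\mathcal Z = [I_r, Z]$ with $Z \in M_{r,s}$ near $Z_0 \in S_{r,s}$. After shrinking further, and composing with a unitary change of coordinates on the target if needed, I may assume $F(\mathcal Z) = [I_{r'}, \Phi(Z)]$, where $\Phi$ is holomorphic near $Z_0$ and $\Phi(Z_0) \in S_{r',s'}$.

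In these coordinates the orthogonality relations become explicit:
\[
[I_r,Z] \perp [I_r,W] \iff I_r - Z W^H = 0,
\qquad
[I_{r'},\Phi(Z)] \perp [I_{r'},\Phi(W)] \iff I_{r'} - \Phi(Z)\Phi(W)^H = 0 .
\]
I then introduce the holomorphic function of $(Z,\zeta)$, with $\zeta$ standing for $\bar W$,
\[
\Psi(Z,\zeta) = I_{r'} - \Phi(Z)\,\overline{\Phi}(\zeta)^t, \qquad \overline{\Phi}(\zeta) := \overline{\Phi(\bar\zeta)} .
\]
This function is holomorphic on a neighborhood of $(Z_0,\bar Z_0)$. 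The complexified Shilov boundary is
\[
\mathcal M = \{(Z,\zeta) : I_r - Z\zeta^t = 0\}.
\]
The hypothesis that $F$ maps null points to null points says exactly that $\Psi(Z,\bar Z) = 0$ for $Z \in S_{r,s}$ near $Z_0$.

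The key step is to show that $\Psi$ vanishes on $\mathcal M$ near $(Z_0,\bar Z_0)$, because that is precisely orthogonality preservation for pairs near $\mathcal Z_0$. I will argue as follows.
\begin{itemize}
\item[(a)] $\mathcal M$ is a complex submanifold near $(Z_0,\bar Z_0)$. The differential of $(Z,\zeta)\mapsto Z\zeta^t$ is surjective onto $M_r$, since varying $Z$ alone gives $\dot Z\,\zeta^t$ and $\zeta^t$ has rank $r$ at $\zeta = \bar Z_0$. Hence $\mathcal M$ is connected locally and has complex dimension $2rs - r^2$.
\item[(b)] The anti-diagonal slice $\{(Z,\bar Z): Z \in S_{r,s}\}$ is a totally real submanifold of $\mathcal M$ of real dimension $2rs - r^2 = \dim_{\mathbb C}\mathcal M$, namely $\dim_{\mathbb R} S_{r,s}$. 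It is therefore a maximally totally real submanifold of $\mathcal M$: the diagonal-conjugate embedding $Z\mapsto (Z,\bar Z)$ has totally real image in $\mathbb C^{rs}\times\mathbb C^{rs}$, and therefore also inside $\mathcal M$.
\item[(c)] A holomorphic function on a connected complex manifold that vanishes on a maximally totally real submanifold vanishes identically. Thus $\Psi|_{\mathcal M} \equiv 0$ on a connected neighborhood $\mathcal N$ of $(Z_0,\bar Z_0)$ in $\mathcal M$.
\end{itemize}

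It remains to choose $U'$. I take a small connected neighborhood $U'$ of $\mathcal Z_0$ such that $U' \times \overline{U'}$, viewed in $(Z,\zeta)$ coordinates, meets $\mathcal M$ only inside $\mathcal N$. Then for any $\mathcal Z, \mathcal W \in U'$ with $\mathcal Z\perp\mathcal W$, the pair $(Z,\bar W)$ lies in $\mathcal N$. It follows that $\Psi(Z,\bar W)=0$, which is exactly $F(\mathcal Z)\perp F(\mathcal W)$.

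I expect step (b) to be the main point needing care. One must check that the real dimension of $S_{r,s}$, namely $r^2 + 2r(s-r) = 2rs - r^2$, equals the complex dimension of $\mathcal M$, and that the embedding is totally real. The latter holds because $T_Z S_{r,s} \cap iT_Z S_{r,s} = 0$ inside $T\mathcal M$ after the map $Z\mapsto(Z,\bar Z)$: a vector of the form $(v,\bar v)$ that is $i$ times another such vector forces $v=0$. Once this is verified, the identity principle does the rest.
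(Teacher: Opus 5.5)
Your argument is correct, but it uses a different key step from the paper. Both proofs work in the charts $[I_r,Z]$ and $[I_{r'},F^\sharp(Z)]$.

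\textbf{The paper's route.} It uses a division statement instead of an identity principle. It asserts that each entry $G_{ij}(Z,\bar Z)$ of $F^\sharp(Z)F^\sharp(Z)^H-I_{r'}$ lies, near a null point, in the real-analytic ideal generated by the entries $h_{\mu\nu}(Z,\bar Z)$ of $ZZ^H-I_r$. It then polarizes $G_{ij}=\sum h_{\mu\nu}g^{\mu\nu}_{ij}$ to $G_{ij}(Z,\bar W)=\sum h_{\mu\nu}(Z,\bar W)\,g^{\mu\nu}_{ij}(Z,\bar W)$. Hence $I_r-ZW^H=0$ forces $I_{r'}-F^\sharp(Z)F^\sharp(W)^H=0$.

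\textbf{Your route.} You complexify first and apply the identity principle to the holomorphic function $\Psi$ on the complexified Shilov boundary $\mathcal M$. This uses that $\{(Z,\bar Z): Z\in S_{r,s}\}$ is maximally totally real in $\mathcal M$.

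\textbf{Comparison.} Both routes rest on the same nondegeneracy. The paper's division step is valid only because the defining functions $h_{\mu\nu}$ (with $h_{\mu\nu}=\overline{h_{\nu\mu}}$) have independent differentials along $S_{r,s}$, and the paper uses this without comment. Your step (a) and the count $\dim_{\mathbb R}S_{r,s}=2rs-r^2=\dim_{\mathbb C}\mathcal M$ make exactly this explicit. Your choice of $U'$ via $(U'\times\overline{U'})\cap\mathcal M\subset\mathcal N$ is also more precise than the paper's ``after further shrinking''. The paper's version is shorter and gives an explicit algebraic identity, while yours avoids the division lemma entirely.

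\textbf{One small correction.} No target normalization is needed. A null $r'$-plane cannot meet the negative definite subspace $\{0\}\times\mathbb C^{s'}$, so $F(\mathcal Z_0)$ already lies in the chart $[I_{r'},\cdot\,]$. If you did normalize, it would have to be by an element of $U(r',s')$ rather than an arbitrary unitary matrix, so that orthogonality is preserved.
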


\begin{proof}
	By composing with automorphisms and shrinking $U$ if necessary, we may assume without loss of generality that $U$ is contained in the affine chart $U_1 \subset \mathcal{G}(r,s)$, where $U_1 = \{\mathcal{Z} = [I_r, Z] \mid Z = (z_{ij})_{r \times s} \in \mathbb{C}^{r \times s}\}$, and that $F(U)$ lies in the analogous chart $U_1' = \{\mathcal{Z}' = [I_{r'}, Z'] \mid Z' \in \mathbb{C}^{r' \times s'}\}$. 
	
	In these coordinates, null points $\mathcal{Z}$ in $\mathcal{G}(r,s)$ are characterized by the equation $I_r - ZZ^H = 0$. Explicitly, $\mathcal{Z} = [I_r, Z] \in \mathcal{G}(r,s)$ is a null point if and only if 
	$$h_{ij}(Z, \bar{Z}) := \sum_{k=1}^{s} z_{ik}\bar{z}_{jk} - \delta_{ij} = 0, \quad (1 \le i \le r, \quad 1 \le j \le r),$$ 
	where $\delta_{ij}$ is the Kronecker delta. 
	
	Assume that $F(\mathcal{Z}) = [I_{r'}, F^{\sharp}(Z)]$ with $F^{\sharp} = (f_{ij})_{r' \times s'}$. Define
	$$G_{ij}(Z, \bar{Z}) := \sum_{k=1}^{s'} f_{ik}(Z)\overline{f_{jk}(Z)} - \delta_{ij}, \quad (1 \le i \le r', \quad 1 \le j \le r'),$$
	where $f(Z)$ denotes $f(z_{11}, \ldots, z_{rs})$.
	
	Since $F$ maps null points to null points, there exists a connected open set $U' \subset U$ containing a null point and real analytic functions $g_{ij}^{kl}(Z, \bar{Z})$ such that
	$$G_{ij}(Z, \bar{Z}) = \sum_{k=1}^{s'} f_{ik}(Z)\overline{f_{jk}(Z)} - \delta_{ij} = \sum_{\mu, \nu=1}^{r} h_{\mu\nu}(Z, \bar{Z})g_{ij}^{\mu\nu}(Z, \bar{Z})$$ 
	holds on $U'$. 
	
	By polarization (after further shrinking $U'$ if necessary), this implies that for all $\mathcal{Z} = [I_r, Z]$ and $\mathcal{W} = [I_r, W]$ in $U'$, we have 
	\begin{equation}\label{bound eq}
		G_{ij}(Z, \bar{W}) = \sum_{k=1}^{s'} f_{ik}(Z)\overline{f_{jk}(W)} - \delta_{ij} = \sum_{k,l} h_{kl}(Z, \bar{W})g_{ij}^{kl}(Z, \bar{W}).
	\end{equation}
	
	For any point $\mathcal{Z} = [I_r, Z] \in \mathcal{G}(r,s)$, a point $\mathcal{W} = [I_r, W]$ lies in $\mathcal{Z}^{\perp} \cap U'$ precisely when $h_{\mu\nu}(Z, \bar{W}) = 0$ for all $k, l$. By \eqref{bound eq}, this forces $G_{ij}(Z, \bar{W}) = 0$, which implies $F(\mathcal{W}) \in F(\mathcal{Z})^{\perp}$. Hence, $F$ preserves orthogonality locally, proving that $F|_{U'}$ is an orthogonal map. 
\end{proof}

\section{Properties of local orthogonal maps between Grassmannians}

First, we introduce the relationship between smooth CR mappings and orthogonal mappings.

\begin{theorem} \label{thm:cr_extension_local}
	Let $s' \ge s \ge 2$, $s'\ge r'$ and let $\Sigma \subset S_{1,s}$ be a connected open piece of the Shilov boundary. Suppose $f: \Sigma \to S_{r',s'}$ is a smooth CR mapping. Then $f$ admits a unique holomorphic extension to a global rational mapping $F$. Furthermore, $F$ is holomorphic on an open neighborhood $U \subset \mathbb{C}^s$ containing the closed ball $\bar{B}^s$, and its restriction to the interior satisfies $F(B^s) \subset \bar{\Omega}_{r',s'}$.
\end{theorem}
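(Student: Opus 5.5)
The plan has three steps: extend $f$ holomorphically and locally, show that the extension is rational by a reflection argument in the spirit of Forstnerič and Huang, and then control the global behaviour on $\bar B^s$ using the maximum principle.

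\emph{Step 1 (local extension).} Write $f=(f_{ij})$ as an $r'\times s'$ matrix of smooth CR functions on $\Sigma\subset S_{1,s}=\partial B^s$. Since $\partial B^s$ is strongly pseudoconvex, the Lewy extension theorem extends each $f_{ij}$ holomorphically to one side (inside the ball) of a neighborhood of any point of $\Sigma$, smoothly up to $\Sigma$. We rewrite the target condition $ff^H=I_{r'}$ in homogeneous coordinates: in $\mathcal{G}(r',s')$ the point $[I_{r'},f]$ is null. Then, as in the proof of Proposition \ref{orthShilov}, we polarize. For $z\in\Sigma$ and $w$ on the Segre variety $Q_z=\{w:\langle w,z\rangle_{1,s}=0\}$, formally $f(w)f(z)^H=I_{r'}$. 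To turn this into an honest identity we first need real-analyticity of $f$. For this we use the standard argument for CR maps into quadrics (Forstnerič, Huang): apply the tangential CR vector fields $L_\alpha$ to $f\bar f^t=I$ to obtain algebraic relations among the jets of $f$ and $\bar f$. Because $s\ge 2$ and $\partial B^s$ is Levi nondegenerate, either $f$ is finitely nondegenerate at a generic point, or its image lies in a proper subvariety to which we can reduce by induction on $r'+s'$. In either case the reflection principle yields holomorphic extension of $f$ across $\Sigma$ near a generic point, and hence real-analyticity.

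\emph{Step 2 (rationality).} With $F$ holomorphic near a point $p\in\Sigma$, the polarized identity $F(w)\overline{F(z)}^t=I_{r'}$ holds for $w\in Q_z$ and $z$ near $p$. Fix finitely many points $z^{(1)},\dots,z^{(N)}$ and solve these linear equations for the entries of $F(w)$. Each equation is linear in $F(w)$ with constant coefficients $\overline{F(z^{(\ell)})}$, and it holds on the hyperplane $Q_{z^{(\ell)}}$. Varying $z$ along the antiholomorphic parameter, the standard Segre-family argument (as in Forstnerič's rationality theorem for sphere maps and Huang's approach) shows that each entry $F_{ij}$ is algebraic and satisfies a polynomial system of degree bounded in terms of $r',s'$. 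The key extra input here is the linear-fractional structure of the target: the map $z\mapsto \overline{F(z)}$ determines $F$ on the hyperplanes $Q_z$. From this, $F$ extends holomorphically along the null-paths sweeping $\mathbb{P}^{1,s}$, and by algebraicity plus single-valuedness along these paths the extension is rational. Uniqueness follows from the identity principle, since $\Sigma$ is a generic (maximally totally real in the CR sense) set for holomorphic functions extended from one side.

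\emph{Step 3 (global behaviour on $\bar B^s$).} This step is the main obstacle. We need to show that the poles of the rational map $F$ avoid $\bar B^s$ and that $F(B^s)\subset\bar\Omega_{r',s'}$. The plan is as follows.
\begin{itemize}
\item Write $F=P/q$ with $q$ a polynomial. The identity $FF^H=I$ holds on all of $\partial B^s$ minus the pole set, by analyticity of $\partial B^s$ as a connected real-analytic hypersurface.
\item For a unit vector $v\in\mathbb{C}^{r'}$, the function $|v^HF|^2$ equals $1$ on $\partial B^s$. The polar set of $F$ is a complex hypersurface, so it cannot meet $\partial B^s$ only in a set on which $F$ remains bounded, unless the singularity is removable.
\item Use the Cima--Suffridge style argument: near a pole point $p\in\partial B^s$, $F$ is bounded on $\partial B^s$ (its entries are unimodular-bounded), which, combined with rationality and strict pseudoconvexity, gives removability.
\item Hence $F$ is holomorphic near $\partial B^s$. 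By Hartogs extension, $F$ is holomorphic on a neighborhood of $\bar B^s$.
\item Finally, $u(z)=\|v^HF(z)\|^2$ is plurisubharmonic and equals $1$ on the boundary, so it is at most $1$ inside by the maximum principle. This gives $I-FF^H\ge 0$ on $B^s$, i.e. $F(B^s)\subset\bar\Omega_{r',s'}$.
\end{itemize}
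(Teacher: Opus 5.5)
\textbf{Verdict: genuine gap.} Your Steps 1--2 try to prove regularity and rationality for CR maps into $S_{r',s'}$ from scratch. For $r'\ge 2$ this is a higher-codimension CR manifold, not a quadric hypersurface, and your sketch does not carry the argument through.

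\emph{The degenerate case.} You offer the dichotomy ``finitely nondegenerate at a generic point, or image in a proper subvariety handled by induction on $r'+s'$'' without justification. A degenerate image lies in some real-algebraic subset of $S_{r',s'}$, and you give no reason why that subset should be (equivalent to) a smaller Shilov boundary $S_{r'',s''}$. Without that, the induction has nothing to run on.

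\emph{The Segre argument in Step 2.} Each identity $F(w)\overline{F(z^{(\ell)})}^t=I_{r'}$ holds only for $w\in Q_{z^{(\ell)}}$, so finitely many of them say nothing about $F$ at a generic $w$. To recover $F(w)$ you must let $z$ range over the Segre variety of $w$ and know that the resulting $\overline{F(z)}$ span enough. That is exactly the nondegeneracy you have not established.

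\emph{The missing idea.} The paper uses the fact that $ff^H=I_{r'}$ forces each row $H_i$ of $f$ to satisfy $H_iH_i^H=1$. So each row is itself a smooth CR map from $\Sigma\subset S^{2s-1}$ into the unit sphere $S^{2s'-1}\subset\mathbb{C}^{s'}$. Forstneri\v{c}'s theorem, applied row by row, then gives at once that $F$ is rational and holomorphic on a neighborhood of $\bar B^s$. The identity $FF^H=I_{r'}$ then spreads from $\Sigma$ to all of $\partial B^s$ by real-analyticity, and the maximum principle finishes. Your last bullet, using $\|v^HF\|^2$, is a correct equivalent of the paper's bound $|u^HFv|\le 1$.

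\emph{A false claim in Step 3.} Boundedness of a rational map on $\partial B^s$ near a pole, together with rationality and strict pseudoconvexity, does not give removability. In $\mathbb{C}^2$ take $g=z_2^2/(1-z_1)$ and $p=(1,0)$.
\begin{itemize}
\item On $\bar B^2$ one has $|z_2|^2\le 1-|z_1|^2\le 2(1-|z_1|)\le 2|1-z_1|$, so $|g|\le 2$ on $\bar B^2\setminus\{p\}$.
\item The polar set $\{z_1=1\}$ is the complex tangent line at $p$ and meets $\bar B^2$ only at $p$.
\item Yet $g\to 2$ along the boundary points $(1-\epsilon,\sqrt{2\epsilon-\epsilon^2})$, while $g=0$ along $(1-\epsilon,0)$. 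So $g$ has no continuous, let alone holomorphic, extension to $p$.
\end{itemize}
What excludes boundary poles is the exact identity $\|P\|^2=|q|^2$ on the sphere for a sphere-valued rational map $P/q$. This is the Cima--Suffridge argument, and it is also contained in Forstneri\v{c}'s theorem. Again, that identity is available precisely for the individual rows $H_i$, not from mere boundedness of the matrix entries. Once boundary poles are excluded, your Hartogs step and the maximum-principle step go through.
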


\begin{proof}

Since $\Sigma$ lies in a smooth strictly pseudoconvex CR manifold, classical local extension theorems in \cite{Tum} guarantee that the smooth CR mapping $f$ extends holomorphically to a wedge domain attached to $\Sigma$. Denote this local matrix-valued holomorphic extension by $F$.

 Write $H_i(z)$ for the $i$-th row of $F(z)$, $1 \le i \le r'$. As $f$ maps $\Sigma$ into the Shilov boundary $S_{r',s'}$, the identity $F(z)F(z)^H = I_{r'}$ holds on $\Sigma$. Hence
$$
H_i(z) H_i(z)^H = 1 \quad \text{for all } z\in\Sigma.
$$
Each component of $f$ is a smooth CR function on $\Sigma$. Identify the Shilov boundary $S_{1,s}$ with the unit sphere $S^{2s-1} \subset \mathbb{C}^{s}$. Therefore,  $H_i|_\Sigma\colon \Sigma \subset S^{2s-1} \to S^{2s'-1}$, defines a local smooth CR map from $\Sigma$ into the unit sphere in $\mathbb{C}^{s'}$. Theorem 1.4 in Forstnerič \cite{F2} asserts that any smooth local CR mapping between such spheres extends to a global rational mapping. In addition, the theorem further implies that this extended rational map is holomorphic throughout the interior ball $B^s$, sends $B^s$ into $B^{s'}$, and has no poles on the boundary $\partial B^s$. 

Since every row $H_i$ extends to a rational map, the full matrix-valued mapping $F = (H_1,\dots,H_{r'})^T$ yields a rational map $F\colon \mathbb{C}^s \dashrightarrow M_{r',s'}$. Write each extended rational row as $H_i = P_i/Q_i$. The denominator $Q_i$ is nonvanishing on the compact sphere $\partial B^s$, hence remains nonvanishing on some small neighborhood of $\partial B^s$. Taking the finite intersection of these neighborhoods over all rows produces an open neighborhood $U\subset\mathbb{C}^s$ containing $\bar{B}^s$, on which $F$ is holomorphic.

As $F$ is holomorphic on $U$, the matrix-valued function
\[
P(z,\bar{z}) := F(z)F(z)^H - I_{r'}
\]
is real-analytic on $U$ and $P(z,\bar{z})=0$ for all $z\in\Sigma$. By the identity principle for real-analytic functions on connected real-analytic manifolds, vanishing on $\Sigma$ forces $P(z,\bar{z})\equiv 0$ on the whole sphere. We obtain the global boundary identity:
\[
F(z)F(z)^H = I_{r'} \quad \text{for all } z\in\partial B^s.
\]
So the operator norm satisfies $\|F(z)\|_{\text{op}} = 1$ for all $z\in\partial B^s$. Fix arbitrary unit vectors $u_0\in\mathbb{C}^{r'}$ and $v_0\in\mathbb{C}^{s'}$. Define
$
g_{u_0,v_0}(z) := u_0^H F(z) v_0.
$
On $\partial B^s$, the Cauchy–Schwarz inequality yields
\[
|g_{u_0,v_0}(z)| = |u_0^H F(z) v_0| \le \|u_0\|_2 \cdot \|F(z)v_0\|_2 \le \|F(z)\|_{\text{op}} = 1.
\]
By the classical maximum modulus principle, $|g_{u_0,v_0}(z)|=|u_0^H F(z) v_0| \le 1$ for all $z \in B^s$.

Taking the supremum over all unit vectors $u_0, v_0$, we conclude
\[
\|F(z)\|_{\text{op}} = \sup_{\|u\|_2=1,\,\|v\|_2=1} \big|u^H F(z) v\big| \le 1 \quad \text{for all } z\in B^s.
\]
This is algebraically equivalent to $I_{r'} - F(z)F(z)^H \ge 0$ for all $z\in B^s$, which exactly states $F(B^s) \subset \bar{\Omega}_{r',s'}$.

Finally, uniqueness of the holomorphic extension follows directly from the identity principle. If $F_1$ and $F_2$ are two such holomorphic extensions, their difference $F_1-F_2$ vanishes on the connected open set $\Sigma$. Since $\Sigma$ is a nonempty open subset of a generic real-analytic CR manifold, the identity principle for holomorphic functions on generic submanifolds forces $F_1 \equiv F_2$ identically on $U$. The proof is complete.
\end{proof}

Combining with Proposition \ref{orthShilov}, any smooth CR mapping $f:\Sigma\subset S_{1,s} \to S_{r',s'}$ extends to a local orthogonal map $F$ from $\mathbb{P}^{1,s}$ to $\mathcal{G}(r',s')$ which maps positive points to semi-positive points.

Now, we will introduce some properties of orthogonal maps between Grassmannians. The following proposition is straightforward.

\begin{proposition}\label{intersection}
	If two points $\mathcal{Z}, \mathcal{W} \in \mathcal{G}(r,s)$ satisfy $\mathcal{Z} \perp \mathcal{W}$ and $\dim(V_{\mathcal{Z}} \cap V_{\mathcal{W}}) > 0$, then every vector $\alpha \in V_{\mathcal{Z}} \cap V_{\mathcal{W}}$ is a null vector in $\mathbb{C}^{r,s}$.
\end{proposition}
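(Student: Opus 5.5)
The plan is to unwind the definition of orthogonality in $\mathcal{G}(r,s)$ into a statement about the subspaces $V_{\mathcal Z}$ and $V_{\mathcal W}$, and then specialize to a vector lying in both. As noted after the definition, $\mathcal Z \perp \mathcal W$ means $A_{\mathcal Z} I_{r,s} A_{\mathcal W}^H = 0$. This is equivalent to $V_{\mathcal Z} \subseteq (V_{\mathcal W})^\perp$ in $\mathbb{C}^{r,s}$. To justify this, I would first observe that the condition says $\langle \alpha_i, \beta_j\rangle_{r,s} = 0$ for every row $\alpha_i$ of $A_{\mathcal Z}$ and every row $\beta_j$ of $A_{\mathcal W}$. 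Then I would extend to arbitrary vectors by sesquilinearity: writing $\alpha = \sum_i x_i \alpha_i$ and $\beta = \sum_j y_j \beta_j$ gives
$$\langle \alpha, \beta\rangle_{r,s} = x\, A_{\mathcal Z} I_{r,s} A_{\mathcal W}^H\, \bar y^{\,t} = 0.$$
This also shows that the condition does not depend on the choice of representative matrices, since changing representatives replaces $A_{\mathcal Z}$ by $XA_{\mathcal Z}$ and $A_{\mathcal W}$ by $YA_{\mathcal W}$, and $X\cdot 0\cdot Y^H = 0$.

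Now I take any $\alpha \in V_{\mathcal Z} \cap V_{\mathcal W}$. Viewing $\alpha$ as an element of $V_{\mathcal Z}$ in the first slot and as an element of $V_{\mathcal W}$ in the second slot, the previous step gives $\|\alpha\|^2_{r,s} = \langle \alpha, \alpha\rangle_{r,s} = 0$. So $\alpha$ is a null vector, and since $\alpha$ was arbitrary this covers every vector in the intersection. The hypothesis $\dim(V_{\mathcal Z}\cap V_{\mathcal W})>0$ only guarantees that the statement is non-vacuous.

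There is no real obstacle in this argument. The only point needing care is the passage from the matrix identity to pairwise orthogonality of all vectors, which is a one-line consequence of sesquilinearity. As a by-product, the intersection $V_{\mathcal Z}\cap V_{\mathcal W}$ is in fact a null subspace, since for $\alpha,\beta$ in it we also get $\langle\alpha,\beta\rangle_{r,s}=0$.
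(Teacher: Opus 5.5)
Your proof is correct and matches the paper's intended reasoning. The paper gives no separate proof (it calls the proposition straightforward) and relies on its remark after the definition that $\mathcal Z \perp \mathcal W$ forces every vector of $V_{\mathcal Z}$ to be orthogonal to every vector of $V_{\mathcal W}$, so $\langle \alpha,\alpha\rangle_{r,s}=0$ for $\alpha$ in the intersection; your sesquilinearity computation makes that remark explicit.
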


\textbf{Notation:} For any subset $H \subset \mathcal{G}(r,s)$, we use $V_{H}$ to denote the linear span of $\cup_{z \in H} V_{z}$ in $\mathbb{C}^{r,s}$.

\begin{lemma} \label{lem:unified_local_nullity}
	Let $s \ge 3$, and let $r', s', k \in \mathbb{N}$ with $2 \le r' \le s'$. Let $F: U \to \mathcal{G}(r',s')$ be a local orthogonal map, where $U \subset \mathcal{G}(1,s)$ is a connected open set containing a null point. If $k(s-1) \le s' - r' < (k+1)(s-1)$ and $r' \ge k$, then there exists a dense analytic open subset $U^\circ \subset U$ and an integer $m \ge r'-k$ such that the dimension of the null space $N_z := V_{F(z)} \cap (V_{F(z)})^\perp$ is exactly $m$ for all $z \in U^\circ$, and $\dim N_z > m$ for all $z \in U \setminus U^\circ$.
\end{lemma}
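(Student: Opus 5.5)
We split the statement into two parts. The first is a semicontinuity argument that produces $U^\circ$ and the generic value $m$. The second is a dimension count, driven by orthogonality, that forces $m \ge r'-k$.

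\emph{Step 1: generic value.} Work in an affine chart where $F(z)=[I_{r'},F^\sharp(z)]$ with $F^\sharp$ holomorphic. Then $V_{F(z)}$ has basis given by the rows of $A(z)=[I_{r'},F^\sharp(z)]$, and
\[
\dim N_z = r' - \operatorname{rank}\bigl(A(z) I_{r',s'} A(z)^H\bigr) = r' - \operatorname{rank}\bigl(I_{r'} - F^\sharp(z)F^\sharp(z)^H\bigr).
\]
This Gram matrix is real-analytic in $z$. Let $\rho$ be its maximal rank on $U$ and set $m=r'-\rho$. Define $U^\circ$ as the set where some $\rho\times\rho$ minor is nonzero. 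It is open, and its complement is the common zero set of real-analytic functions that do not all vanish identically; since $U$ is connected, $U^\circ$ is dense. On $U^\circ$ we have $\dim N_z=m$, and off $U^\circ$ the rank drops, so $\dim N_z>m$. This part is routine.

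\emph{Step 2: the lower bound $m\ge r'-k$.} Fix a generic null point $p\in U^\circ$; $U^\circ$ meets the null points because they form a real-analytic hypersurface that is not contained in the proper analytic set $U\setminus U^\circ$. Orthogonal maps preserve null points, since $p\perp p$ gives $F(p)\perp F(p)$, so $N_p=V_{F(p)}$ and $m=r'$ at null points of $U^\circ$. That case is trivially fine.

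The content is therefore at non-null generic points $z$, where $V_{F(z)}$ has nondegenerate part of dimension $r'-m$. Suppose $r'-m\ge k+1$. Take $z$ positive, meaning a positive line in $\mathbb{C}^{1,s}$. Then $z^\perp\cap U$ is an open piece of a copy of $\mathbb{P}^{0,s}$, i.e.\ of the $(s-1)$-dimensional projective space of the negative $s$-plane $z^\perp$. Choose points $w_1,\dots,w_s\in z^\perp\cap U$ that are generic and pairwise orthogonal, with each $w_i$ in $U^\circ$. This is possible locally: pairwise orthogonal frames near a given frame in $z^\perp$ exist via the unitary group $U(s)$ acting on $z^\perp$.

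The images $V_{F(w_i)}$ are mutually orthogonal and orthogonal to $V_{F(z)}$, and each has nondegenerate part of dimension $r'-m$. Since $z$ is positive and $F$ maps positive points to semi-positive points, $V_{F(z)}$ has a positive part of dimension $r'-m$. The $w_i$ are negative points, so their images are degenerate in the opposite sign. I expect $V_{F(w_i)}$ to carry negative parts of dimension $r'-m$ after accounting for the relevant signs. This is delicate, since we only know the sign on positive points, and I would try to obtain the sign at negative points from the boundary identity and the maximum principle argument applied to the reflected map.

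Next, mutually orthogonal nondegenerate pieces are linearly independent, and by Proposition \ref{intersection} they intersect only in null vectors. Their total positive dimension is at most $r'$ and their total negative dimension is at most $s'$.

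\emph{Main obstacle: the sharp count.} The crude bound $s(r'-m)\le s'$ is far too weak. The sharp count must use the variation of the image along the $(s-1)$-dimensional family $z^\perp$. The idea is as follows. Differentiating the orthogonality relation $F(z)\perp F(w)$ in $w$ along $z^\perp$, the first derivatives of $F$ at $w$ in $s-1$ independent directions, applied to the nondegenerate part, produce $(s-1)(r'-m)$ further directions. These directions are orthogonal to $V_{F(z)}$ and independent modulo the null space. Together with $N_z$ and the nondegenerate part they must fit into $\mathbb{C}^{r',s'}$. This should yield
\[
(s-1)(r'-m)\le s'-r'+\text{(correction)},
\]
and $r'-m\ge k+1$ then contradicts $s'-r'<(k+1)(s-1)$.

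Proving the independence of these derivative directions will be the hardest point. I would first try to prove it by restricting $F$ to a complex line through $w$ inside $z^\perp$ and using that a nonconstant holomorphic family of nondegenerate subspaces orthogonal to a fixed one sweeps out new directions, in the spirit of the argument of \cite{GN}, with a rank calculation on $dF$.
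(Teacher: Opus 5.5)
\textbf{Verdict: genuine gap.} Step 1 matches the paper. Step 2 never proves the inequality that forces $m\ge r'-k$. In its place you offer an expected independence of $(s-1)(r'-m)$ derivative directions along $z^\perp$. You do not establish that independence, and it is not needed.

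The configuration you already wrote down is exactly the one the paper uses: a positive $z$ together with pairwise orthogonal $w_1,\dots,w_s\in z^\perp$, all in $U^\circ$. These are the paper's $s+1$ mutually orthogonal generic points $z_1,\dots,z_{s+1}$. What you are missing is to track the null spaces and not only the nondegenerate parts.
\begin{itemize}
\item Since $\sum_{j\ne i}V_{F(z_j)}\subset (V_{F(z_i)})^\perp$, one has $V_{F(z_i)}\cap\sum_{j<i}V_{F(z_j)}\subset N_{z_i}$. Hence $\dim\sum_i V_{F(z_i)}\ge (s+1)r'-sm$.
\item Every $V_{F(z_i)}$ is orthogonal to $N_{z_1}$, so the sum lies in $(N_{z_1})^\perp$, which has dimension $r'+s'-m$.
\item Combining the two gives $(s-1)m\ge sr'-s'$, i.e.\ $(s-1)(r'-m)\le s'-r'$. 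The gap condition $s'-r'<(k+1)(s-1)$ then yields $m\ge r'-k$.
\end{itemize}
Equivalently, the mutually orthogonal nondegenerate parts inject into $N_{z_1}^\perp/N_{z_1}$, which has signature $(r'-m,s'-m)$. The $-m$ in each slot is exactly the ``correction'' missing from your crude bound $s(r'-m)\le s'$.

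This count uses no sign information. That matters, because the lemma does not assume $F$ sends positive points to semi-positive points; that hypothesis first enters in Lemma~\ref{fixed_null_space}. Your sign discussion therefore relies on a hypothesis you do not have.

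\textbf{Two smaller errors.}
\begin{itemize}
\item The claim that $U^\circ$ meets the null hypersurface is false. You correctly note $\dim N_p=r'$ at a null point $p$. So whenever $m<r'$, every null point lies in $U\setminus U^\circ$. For the standard embedding, $m=r'-1$ and $U\setminus U^\circ$ is exactly the sphere. If your claim held, the lemma would be trivial with $m=r'$, contradicting the non-null case you go on to analyze.
\item Acting by $U(s)$ on $z^\perp$ only perturbs an orthogonal frame you already have inside $U$. To obtain one in the first place, $z$ must be chosen close to a null point $p$, so that $z^\perp$ passes near $p$. For an arbitrary positive $z$, orthogonal points of $z^\perp$ need not fit in a small $U$. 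The paper handles this by choosing $z_1,z_2,\dots$ iteratively near $p$ in the generic locus.
\end{itemize}
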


\begin{proof}
	Locally on $U$, we can lift the map $F$ to a holomorphic matrix-valued function $A(z)$ of size $r' \times (r'+s')$, where the rows of $A(z)$ span the $r'$-dimensional subspace $V_{F(z)}$. The restriction of the target indefinite Hermitian form to $V_{F(z)}$ is given by the $r' \times r'$ Gram matrix:
	$$ M(z) = A(z)I_{r',s'}A(z)^H. $$
	Let $R_{max}$ be the maximum rank of $M(z)$ achieved on $U$. The set where this maximum rank is attained, $U^\circ = \{z \in U \mid \mathrm{rank}\, M(z) = R_{max}\}$, is a dense analytic open subset of $U$. The dimension of the null space is precisely $\dim N_z = r' - \mathrm{rank}\, M(z)$. Therefore, on $U^\circ$, this dimension is a constant $m := r' - R_{max}$, and for any $z \in U \setminus U^\circ$, the rank strictly drops, yielding $\dim N_z > m$.
	
	We now prove that this generic minimal nullity satisfies $m \ge r'-k$. Let $p \in U$ be a null point. Then $p \in p^\perp$. We will iteratively construct $s+1$ mutually orthogonal non-null points $z_1, \ldots, z_{s+1}$ entirely within the generic locus $U^\circ$.
	
	Since $U^\circ$ is a dense analytic open set in $U$, we can choose a positive point $z_1 \in U^\circ$ arbitrarily close to $p$. The orthogonal complement $z_1^\perp$ intersects $U$ in a non-empty open subset $U_1 = z_1^\perp \cap U$. Then $U^\circ$ is also dense in $U_1$. Thus, the intersection $U_1 \cap U^\circ$ is non-empty and contains points arbitrarily close to $p$, allowing us to choose $z_2 \in U_1 \cap U^\circ$ near $p$. 
	Proceeding inductively, suppose mutually orthogonal points $z_1, \dots, z_l \in U^\circ$ have been chosen near $p$. Their orthogonal complement $(z_1 \oplus \dots \oplus z_l)^\perp$ intersects $U$ in a non-empty open subset $U_l$. The denseness of $U^\circ$ again ensures that $U_l \cap U^\circ \neq \emptyset$, allowing us to choose the next point $z_{l+1} \in U_l \cap U^\circ$ near $p$. Repeating this process yields $s+1$ mutually orthogonal non-null points $z_1, \dots, z_{s+1}$ entirely within $U^\circ$.
	
	Because all $z_i \in U^\circ$, we strictly have $\dim N_{z_i} = m$ for all $1 \le i \le s+1$. For $i \neq j$, the orthogonality $z_i \perp z_j$ implies $F(z_i) \perp F(z_j)$, yielding $V_{F(z_j)} \subset (V_{F(z_i)})^\perp$. Consequently, for each $i$, we have $\sum_{j \neq i} V_{F(z_j)} \subset (V_{F(z_i)})^\perp$. Intersecting both sides with $V_{F(z_i)}$ yields:
	$$ V_{F(z_i)} \cap \sum_{j \neq i} V_{F(z_j)} \subset V_{F(z_i)} \cap (V_{F(z_i)})^\perp = N_{z_i}. $$
	This provides the precise dimension bound:
	$$ \dim\left(V_{F(z_i)} \cap \sum_{j \neq i} V_{F(z_j)}\right) \le \dim N_{z_i} = m. $$
	Applying the general dimension inequality for sums of vector spaces:
	\begin{align*}
		\dim\left(\sum_{i=1}^{s+1} V_{F(z_i)}\right) &\ge \sum_{i=1}^{s+1} \dim V_{F(z_i)} - \sum_{i=2}^{s+1} \dim\left(V_{F(z_i)} \cap \sum_{j=1}^{i-1} V_{F(z_j)}\right) \\
		&\ge (s+1)r' - s m.
	\end{align*}
	On the other hand, since $N_{z_1}$ is an $m$-dimensional null subspace, its orthogonal complement $(N_{z_1})^\perp$ is an $(r'-m, s'-m, m)$-subspace. For $j \ge 2$, $V_{F(z_j)} \subset (V_{F(z_1)})^\perp \subset (N_{z_1})^\perp$. Moreover, $V_{F(z_1)} \subset (N_{z_1})^\perp$. Therefore,
	$$ \sum_{i=1}^{s+1} V_{F(z_i)} \subset (N_{z_1})^\perp, $$
	which provides the upper bound:
	$$ \dim\left(\sum_{i=1}^{s+1} V_{F(z_i)}\right) \le r' + s' - m. $$
	Combining the lower and upper bounds with the gap condition $s' - r' < (k+1)(s-1)$ and $s \ge 3$, we deduce:
	$$ (s+1)r' - s m \le r' + s' - m \implies (s-1)m \ge s r' - s' > (s-1)r' - (k+1)(s-1). $$
	Dividing by $s-1$ yields $m > r' - (k+1)$. Since $m$ is an integer, we conclude that $m \ge r' - k$. 
\end{proof}

\begin{lemma}\label{fixed_null_space}
	 Let the notations and assumptions be exactly as in Lemma \ref{lem:unified_local_nullity}. If $F$ furthermore maps positive points to semi-positive points, then there exists a fixed $m$-dimensional null subspace $N \subset \mathbb{C}^{r^{\prime},s^{\prime}}$ such that $N_{z}=N$ for all $z \in U^{\circ}$. In particular, $N \subset V_{F(z)}$ for all $z \in U$.
\end{lemma}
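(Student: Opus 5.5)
The plan is to show that the map $z \mapsto N_z$, which is real-analytic on $U^\circ$ with values in the Grassmannian of $m$-planes in $\mathbb{C}^{r',s'}$, varies both holomorphically and antiholomorphically. A map with both properties is locally constant, and since $U^\circ$ is connected it is constant. Work locally with the holomorphic lift $A(z)$ from the proof of Lemma \ref{lem:unified_local_nullity}. Introduce the polarized Gram matrix
\[
M(z,w) = A(z) I_{r',s'} A(w)^H ,
\]
which is holomorphic in $z$ and antiholomorphic in $w$, and satisfies $M(z,z)=M(z)$. On $U^\circ$ the left kernel $K_z=\{c\in\mathbb{C}^{r'} : cM(z)=0\}$ has constant dimension $m$, and $N_z = K_z A(z)$.

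The first step uses semi-positivity at positive points of $U^\circ$, which are dense near the null point $p$. Fix such a point $z_0$ and a vector $c\in K_{z_0}$. The real-analytic function $\varphi(z)=c\,M(z,z)\,c^H$ is $\ge 0$ for positive $z$ near $z_0$ and vanishes at $z_0$. Hence $z_0$ is a minimum and its gradient vanishes there:
\[
c\,\partial_{z}A(z_0)\, I_{r',s'}\, A(z_0)^H c^H = 0 .
\]
The aim is to upgrade this scalar identity to vector identities, using the stronger fact that $z\mapsto M(z)$ is a semi-positive Hermitian-matrix-valued function with $M(z_0)c^H=0$. Apply the same minimum argument to $(c+t d)M(z)(c+td)^H$ for all $d\in\mathbb{C}^{r'}$ and small $t$. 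The expected conclusion is
\[
c\,\partial_z A(z_0)\, I_{r',s'}\, A(z_0)^H = 0 \pmod{\text{terms in } K_{z_0}},
\]
that is, the first-order holomorphic variation of $N_{z_0}$ stays orthogonal to $V_{F(z_0)}$. Together with the orthogonality relations $M(z,w)=0$ for $z\perp w$ (the Segre variety relation), this should show that the holomorphic derivative of $N_z$ lands inside $N_{z_0}+V_{F(z_0)}^{\perp}\cap V_{F(z_0)} = N_{z_0}$. So $N_z$ is annihilated by $\bar\partial$-type variations in the Grassmannian, i.e.\ it is antiholomorphic. By conjugate symmetry of $M$, the same computation with the roles of $z$ and $\bar z$ exchanged gives holomorphicity.

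An alternative route to the second half uses the identity $N_z = V_{F(z)}\cap V_{F(z)}^{\perp}$, where the first factor is holomorphic in $z$ and the second antiholomorphic. I would try to show $N_z = V_{F(z)}\cap\big(V_{F(z)}\cap V_{H_z}^{\perp}\big)$, where $H_z = z^\perp\cap U$ and $V_{H_z}$ is the span of the $V_{F(w)}$ for $w\in H_z$. By orthogonality, $V_{H_z}\subset V_{F(z)}^\perp$, and the dimension count in Lemma \ref{lem:unified_local_nullity} suggests that equality holds generically. This would express $N_z$ through data depending only holomorphically on $z$, namely the family of Segre varieties $z^\perp$, whose points $w$ depend antiholomorphically on $z$ and enter $F$ via $\overline{A(w)}$. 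Once $N_z\equiv N$ on $U^\circ$, the statement $N\subset V_{F(z)}$ extends to all of $U$ by closedness: the condition that an $m$-plane lies in $V_{F(z)}$ is closed in $z$, and $U^\circ$ is dense.

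The main obstacle is passing from the scalar minimum conditions to full vanishing of the derivative of $N_z$. I would first try to do this by diagonalizing $M(z)$ near $z_0$ and using that a semi-positive matrix function vanishing on $K_{z_0}$ has vanishing first derivatives on $K_{z_0}$. I would then check whether the resulting identity, combined with $M(z,w)=0$ on the Segre varieties, already forces $\partial N_z\subset N_z$.
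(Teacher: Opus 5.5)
\textbf{There is a genuine gap at the step you call the main obstacle, and it is not merely technical.}

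At a positive $z_0\in U^\circ$ with $c\in K_{z_0}$, the minimum of $\varphi=cMc^H$ yields only the scalar identity $c\,\partial_zA(z_0)\,I_{r',s'}A(z_0)^Hc^H=0$. The principle you plan to use to upgrade it is false. Take $M(t)=\left(\begin{smallmatrix} t^2 & t\\ t & 1\end{smallmatrix}\right)$, which is positive semidefinite. Then $c=(1,0)$ satisfies $cM(0)=0$, yet $cM'(0)=(0,1)\neq 0$. In general, the bound $|cMd^H|^2\le (cMc^H)(dMd^H)=O(|z-z_0|^2)$ only gives $cMd^H=O(|z-z_0|)$, which holds automatically.

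Moreover, the conclusion you expect from this step carries no information. For any smooth section $n(z)=c(z)A(z)$ of $N_z$ over $U^\circ$, applying $\partial_z$ to $n\,I_{r',s'}A^H\equiv 0$ gives $\partial_z n\perp V_{F(z)}$ for free, because $A^H$ is antiholomorphic. What must actually be shown is $\partial_z n\in V_{F(z)}$; in the chart $A=[I_{r'},Z]$ this reads $c\,\partial_z Z=0$ for $c\in K_z$. For this you only say the Segre relations ``should show'' it. So orthogonality preservation and the minimality of $m$ are never actually used.

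The alternative route does not fill the gap. Since $V_{H_z}\subset V_{F(z)}^\perp$, you get $V_{F(z)}\subset V_{H_z}^\perp$, so $V_{F(z)}\cap(V_{F(z)}\cap V_{H_z}^\perp)=V_{F(z)}$, not $N_z$. Also, $V_{H_z}=V_{F(z)}^\perp$ is false in general: for the map $F_1$ of Example~\ref{ex:optimal_bound}, $\dim V_{H_z}=r'+s-1<s'$ whenever $s'-r'>s-1$.

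\textbf{How the paper argues, and how your idea could be repaired.} The paper works at zeroth order. For positive $z\in U^\circ$, $V_{F(z)}$ is a semi-positive $r'$-plane in $\mathbb{C}^{r',s'}$, so $V_{F(z)}^\perp$ is negative semidefinite with radical exactly $N_z$. If $w\perp z$, the totally null space $N_w\subset V_{F(w)}\subset V_{F(z)}^\perp$ must lie in $N_z$, and $\dim N_w\ge m$ forces $N_w=N_z$. Positive points are then linked through common orthogonal points, and $N\subset V_{F(z)}$ is propagated to all of $U$ by the identity theorem.

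Your idea of exploiting semi-positivity can be rescued, but at second order rather than first. In the chart, $cM(z)c^H=\|c\|^2-\|cZ(z)\|^2\ge 0$ at positive $z$, with equality at $z_0$. So the holomorphic map $z\mapsto cZ(z)$ attains its maximal norm at an interior point, and the maximum principle gives $cZ\equiv cZ(z_0)$. The fixed vector $(c,cZ(z_0))$ then lies in $N_z$ for every positive $z$ in the component of $z_0$.

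\textbf{Connectedness of $U^\circ$.} $U^\circ$ is not connected in general, because the rank-drop locus of the real-analytic $M(z)$ can be a real hypersurface. For $F_1$ one has $M(z)=\mathrm{diag}(0,\dots,0,1-|z|^2)$, so $U^\circ=U\setminus S_{1,s}$. Hence ``locally constant, hence constant'' needs an extra step. One option is the identity theorem applied to the holomorphic inclusion $N\subset V_{F(z)}$ and the antiholomorphic inclusion $N\subset V_{F(z)}^\perp$.
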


\begin{proof}
	By Lemma \ref{lem:unified_local_nullity}, there exists a dense analytic open subset $U^\circ \subset U$ such that the null space $N_z := V_{F(z)} \cap (V_{F(z)})^\perp$ has a minimal constant dimension $m$ for $z \in U^\circ$, and $\dim N_w \ge m$ for all $w \in U$.
	
	For any positive point $z \in U^\circ$,  $V_{F(z)}$ is an $r'$-dimensional semi-positive subspace of a space of signature $(r',s')$,
	$V_{F(z)}^{\perp}$ is negative semi-definite. Consequently, the only null vectors in $(V_{F(z)})^\perp$ are precisely those in its null space $N_z$. If $w \in U$ satisfies $w \perp z$, orthogonality preservation yields $V_{F(w)} \subset (V_{F(z)})^\perp$. Since $N_w \subset V_{F(w)}$ is totally null, we obtain the inclusion $N_w \subset N_z$. The dimension constraint $\dim N_w \ge m = \dim N_z$ strictly forces $N_w = N_z$.
	
	Since positive points are dense near the null point $z_0$, we can choose two positive points $z_1, z_2 \in U^\circ$ arbitrarily close to $z_0$. The orthogonal complement $(V_{z_1} \oplus V_{z_2})^\perp$ has signature $(0, s-1)$.  As $z_1, z_2$ arbitrarily close to $z_0$ and $z_0 \perp z_0$, this projective subvariety passes arbitrarily close to $z_0$, and thus intersects $U$.
	
	Taking any $w \in (V_{z_1} \oplus V_{z_2})^\perp \cap U$, we have $w \perp z_1$ and $w \perp z_2$. By our previous deduction, $N_{z_1} = N_w = N_{z_2}$. This shows that $N_z$ is locally constant on the real open set of positive points in $U^\circ$, and thus freezes into a fixed null subspace $N$. Crucially, while the map $z \mapsto N_z$ is only real-analytic, the target subspace mapping $z \mapsto V_{F(z)}$ is a Grassmannian-valued holomorphic map. By the identity theorem for holomorphic functions, the algebraic inclusion $N \subset V_{F(z)}$ extends from this real open set to the entire connected domain $U$. The proof is complete.
\end{proof}

With the geometric rigidity established in the preceding lemmas, now we can prove our main result, Theorem \ref{main_shilov}. The existence of a globally fixed null subspace essentially ``locks'' a portion of the extended mapping. By applying a suitable target automorphism to normalize this fixed null space, we can decouple the mapping, forcing it into a strict block-diagonal form on the boundary.

\begin{proof}[Proof of Theorem \ref{main_shilov}]Let $\Sigma$ be an open piece in $S_{1,s}$. 
	By Theorem \ref{thm:cr_extension_local}, the local smooth CR mapping $f: \Sigma \to S_{r',s'}$ extends to a local holomorphic map $F: U \subset \mathcal{G}(1,s) \to \mathcal{G}(r',s')$, where $U$ is a connected open neighborhood of $\Sigma$. Since $f$ maps the null points to the null points of $S_{r',s'}$, Proposition \ref{orthShilov} ensures that $F$ is a local orthogonal map. Furthermore, by Theorem \ref{thm:cr_extension_local}, $F$ maps positive points in $U$ to semi-positive points.

	By Lemma \ref{lem:unified_local_nullity} and Lemma \ref{fixed_null_space}, the gap condition $k(s-1) \le s' - r' < (k+1)(s - 1)$ guarantees the existence of a globally fixed $(r'-k)$-dimensional null subspace $N \subset \mathbb{C}^{r',s'}$ such that $N \subset V_{F(z)} \cap (V_{F(z)})^\perp$ for all $z \in U$.
	
	Since $SU(r',s')$ acts transitively on totally isotropic subspaces of a given dimension, we can apply a target automorphism to map $N$ to the standard null subspace. Geometrically, applying this automorphism corresponds to evaluating the map in a suitably adapted affine coordinate chart of the target Grassmannian.
	
	In this standard affine coordinate chart, where a point is represented via its row span $[I_{r'}, Z]$, this standard subspace $N$ is spanned precisely by the first $r'-k$ rows of $[I_{r'}, E]$, where $E = \left(\begin{smallmatrix} I_{r'-k} & 0 \\ 0 & 0 \end{smallmatrix}\right)$.
	
	Because $V_{F(z)}$ contains $N$ and is simultaneously orthogonal to $N$, the transformed subspace $V_{(\Phi\circ F)(z)}$ contains $\Phi(N)$ and is orthogonal to it. We can therefore choose an orthogonal representative matrix for $V_{(\Phi\circ F)(z)}$ adapted to the orthogonal splitting $V_{(\Phi\circ F)(z)} = \Phi(N) \oplus^\perp W_z$. In our chosen affine coordinates, this geometric decoupling trivially forces the coordinate matrix of the transformed map to take the strictly block-diagonal form:
	\[
	Z(V_{(\Phi\circ F)(z)}) = 
	\begin{pmatrix}
		I_{r'-k} & 0 \\
		0 & \tilde{\phi}(z)
	\end{pmatrix}.
	\]
	
	Finally, by applying a suitable domain automorphism $\Psi \in \operatorname{Aut}(\Omega_{1,s})$ to normalize the standard coordinates if necessary, and restricting the extended map back to the Shilov boundary piece $\Psi(\Sigma)$, we obtain the desired form for the original CR map:
	\[
	(\Phi \circ f \circ \Psi^{-1})(z) =
	\begin{pmatrix}
		I_{r'-k} & 0 \\
		0 & \phi(z)
	\end{pmatrix},
	\]
	where $\phi \colon \Psi(\Sigma) \to S_{k,s'-r'+k}$ inherently inherits the property of being a smooth CR map into the corresponding Shilov boundary.
\end{proof}

We construct a family of holomorphic mappings to demonstrate that the signature difference bound $k(s-1) \le s' - r' < (k+1)(s-1)$ in Theorem \ref{main_shilov} is sharp. In other words, for a given gap $s'-r'$ in this range, the constant identity block $I_{r'-k}$ cannot be improved to $I_{r'-k+1}$ in general.

\begin{example}\label{ex:optimal_bound}
	Let $s \ge 3$ and  $k \ge 1$ be an integer satisfying $k(s-1) \le s' - r' < (k+1)(s-1)$. We can write the gap as $s' - r' = k(s-1) + l$, where $l$ is an integer such that $0 \le l < s-1$. For any $z = (z_1, \dots, z_s) \in S_{1,s}$, we define a $k \times ks$ block-diagonal matrix $D_k(z)$ by repeating $z$ along the diagonal:
	$$D_k(z) = \begin{pmatrix} 
		z & 0 & \dots & 0 \\ 
		0 & z & \dots & 0 \\ 
		\vdots & \vdots & \ddots & \vdots \\ 
		0 & 0 & \dots & z 
	\end{pmatrix}.$$
	Now, we define the map $F_k: S_{1,s} \to S_{r', s'}$ by
	$$F_k(z) = \begin{pmatrix} 
		I_{r'-k} & 0 & 0 \\ 
		0 & D_k(z) & 0_{k \times l}
	\end{pmatrix}.$$
\end{example}

\begin{remark}\label{rmk:sharpness}
	The family of maps $F_k$ constructed in Example \ref{ex:optimal_bound} demonstrates that the interval bound $k(s-1) \le s' - r' < (k+1)(s-1)$ for obtaining the identity block $I_{r'-k}$ is sharp at both its lower and upper limits.
\end{remark}

We recall the following rigidity theorem about orthogonal maps between projective spaces from \cite{GN}.

\begin{theorem}[\cite{GN}]\label{thm:ball}
	Let $l, l' \ge 2$ in $\mathbb{N}$, and let $f$ be a local orthogonal map from $\mathbb{P}^{1,l}$ to $\mathbb{P}^{1,l'}$.
	\begin{itemize}
		\item[(1)] If $l' < l$, then $f$ is a constant map.
		\item[(2)] If $l \le l' \le 2l-2$, then $f$ is a linear map or a constant map.
	\end{itemize}
\end{theorem}

While Theorem \ref{main_shilov} establishes a general structural reduction for arbitrary gap intervals, its analytical power becomes particularly evident when applied to the lowest gaps. Specifically, when $k=1$, the essential component $\phi$ is forced to map from $S_{1,s}$ into $S_{1, s'-r'+1}$, completely eliminating the higher-rank complexity of the target space. This nature geometric reduction bridges our mixed-rank setting directly to the classical rank-1 rigidity theory. By coupling our strict block-diagonalization with established rigidity result Theorem \ref{thm:ball} for projective spaces , we obtain a complete and explicit classification of the mappings in these initial gap intervals. 

As a direct consequence of this reduction, we obtain the following rigidity corollary:

\begin{corollary}\label{cor:linear_embedding}
	Let $s \ge 2$, and let $r', s' \in \mathbb{N}$ with $2 \le r' \le s'$. Let $\Sigma \subset S_{1,s}$ be a connected open piece of the Shilov boundary, and let $f: \Sigma \to S_{r',s'}$ be a smooth CR mapping. 
	\begin{itemize}
		\item[(1)] If $s' - r' < s - 1$, then $f$ is a constant map.
		
		\item[(2)] If $s - 1 \le s' - r' < 2s - 2$, then after composing with suitable automorphisms of $\Omega_{1,s}$ and $\Omega_{r',s'}$, $f$ is given by the standard linear embedding:
		$$ z \mapsto \begin{pmatrix}
			I_{r'-1} & 0 & 0 \\
			0 & z & 0 
		\end{pmatrix}. $$
	\end{itemize}
\end{corollary}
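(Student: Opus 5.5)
Both parts follow by combining Theorem \ref{main_shilov} with the rank-one rigidity Theorem \ref{thm:ball}. First, the setup. By Theorem \ref{thm:cr_extension_local}, $f$ extends to a holomorphic map $F$ defined near $\Sigma$. By Proposition \ref{orthShilov}, $F$ is a local orthogonal map from $\mathbb{P}^{1,s}$ to $\mathcal{G}(r',s')$ sending positive points to semi-positive points. Since Theorem \ref{main_shilov} and Lemma \ref{lem:unified_local_nullity} assume $s\ge 3$, I will treat $s=2$ separately. In that case $S_{1,2}$ is the three-sphere and the gap intervals are $s'-r'<1$ and $1\le s'-r'<2$. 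I expect to handle it by the same dimension count, or else by citing \cite{Kim2}. The main line assumes $s\ge3$.

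For part (1), $s'-r'<s-1$ is the case $k=0$ of the gap condition. Lemmas \ref{lem:unified_local_nullity} and \ref{fixed_null_space} then give a fixed null subspace $N$ of dimension $m\ge r'$ with $N\subset V_{F(z)}$ for all $z\in U$. Since $\dim V_{F(z)}=r'$, this forces $V_{F(z)}=N$ for every $z$. Hence $F$, and therefore $f$, is constant. Equivalently, the block $\phi$ in Theorem \ref{main_shilov} is empty and $f$ is congruent to $I_{r'}$.

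For part (2), take $k=1$ in Theorem \ref{main_shilov}. After automorphisms,
\[
f=\begin{pmatrix} I_{r'-1} & 0\\ 0&\phi\end{pmatrix},
\]
where $\phi$ is a smooth CR map from $\Sigma$ into $S_{1,s'-r'+1}$, the sphere in $\mathbb{C}^{l'}$ with $l'=s'-r'+1$. The hypothesis $s-1\le s'-r'<2s-2$ translates to $s\le l'\le 2s-2$. Extending $\phi$ and applying Proposition \ref{orthShilov} gives a local orthogonal map from $\mathbb{P}^{1,s}$ to $\mathbb{P}^{1,l'}$. Theorem \ref{thm:ball}(2) says this map is linear or constant. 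In the linear case, it preserves the Shilov boundary and the null form. After composing with automorphisms of $B^s$ and $B^{l'}$, it therefore becomes $z\mapsto(z,0)$. The standard fact that a linear map between generalized balls sending null points to null points is an isometric embedding up to scaling justifies this normalization. Inserting it into the block form gives the standard embedding
\[
\begin{pmatrix} I_{r'-1}&0&0\\0&z&0\end{pmatrix}.
\]

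The main obstacle is the constant alternative for $\phi$ in part (2). Then $f$ is constant, which is not the stated standard embedding, so the statement must be read for nonconstant $f$ (as in \cite{Kim2}). I would state this restriction explicitly. A second, minor point is checking that the extension of $\phi$ satisfies the hypotheses of Theorem \ref{thm:ball}, namely that it is an orthogonal map near a null point. This follows because the block decomposition is compatible with the inner product: $\phi\phi^H=1$ on $\Sigma$. Finally, the case $s=2$, where $2s-2=s$, must be checked by hand.
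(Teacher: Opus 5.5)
Your route is the paper's: the corollary is read off there in one line from the block reduction of Theorem~\ref{main_shilov} together with Theorem~\ref{thm:ball}. Two of your details are correct and make explicit what the paper leaves implicit. The first is the translation $s\le s'-r'+1\le 2s-2$. The second is the normalization of the linear case: a linear map that is not projectively constant and sends null vectors to null vectors pulls the target form back to a positive multiple of the source form, and Witt's theorem then extends it to an element of $U(1,s'-r'+1)$. Your caveat about constants is a defect of the statement, not of your proof. A constant $f\equiv Z_0\in S_{r',s'}$ satisfies every hypothesis of (2), and no composition with automorphisms makes it nonconstant, so (2) has to be read for nonconstant $f$.

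What your proposal leaves genuinely open is $s=2$. The paper has the same hole, since Theorem~\ref{main_shilov} and Lemmas~\ref{lem:unified_local_nullity} and~\ref{fixed_null_space} are stated only for $s\ge 3$. It can be closed with the same tools.

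\begin{itemize}
\item The dimension count in Lemma~\ref{lem:unified_local_nullity} only divides by $s-1$, so it remains valid for $s=2$.
\item For (1), the count gives $m=r'$, hence $U^\circ=U$ and $F(z)F(z)^H=I_{r'}$ on an open set. Each row is then a holomorphic map into a sphere and is constant.
\item This bypasses Lemma~\ref{fixed_null_space}, which is convenient. That lemma's semi-positivity input comes from Theorem~\ref{thm:cr_extension_local}, which assumes $s'\ge s$, whereas (1) allows $s'<s$. In that case the rows are CR maps into lower-dimensional spheres and are constant anyway.
\item For (2) with $s=2$, one has $s'-r'=1=k(s-1)$, the lower endpoint of the gap interval, and nonconstancy forces $m=r'-1$.
\item Take any orthogonal triple $z_1,z_2,z_3\in U^\circ$. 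The chain $r'+2=3r'-2m\le\dim\sum_i V_{F(z_i)}\le\dim N_{z_j}^\perp=r'+2$ is then an equality for each $j$, so $N_{z_1}=N_{z_2}=N_{z_3}$.
\item This replaces the step of Lemma~\ref{fixed_null_space} that needs $(V_{z_1}\oplus V_{z_2})^\perp$ to meet $U$, which fails in general because that set is a single point when $s=2$.
\item Varying the triple gives a null space that is constant on an open set. The holomorphic-extension step of that lemma, the block reduction, and Theorem~\ref{thm:ball}(2) with $l=l'=2$ then finish the argument.
\end{itemize}
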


The following example demonstrates that the inequalities in Corollary \ref{cor:linear_embedding} governing the rigidity bounds are sharp.

\begin{example}
	The generalized Whitney map $f$ from $
	S_{1,s}$ to $S_{r',r'+2s-2}$ is given by
	$$z=[z_1,\cdots,z_s] \mapsto \begin{pmatrix}
		z_1 & \cdots & z_{s-1} & z_1z_s & z_2z_s & \cdots & z_{s}^2 & 0 & \cdots & 0 \\ 	
		0 & \cdots & 0 & 0 & 0 & \cdots & 0 & 1 & \cdots & 0 \\	
		\vdots & \ddots & \vdots & \vdots & \vdots & \ddots & \vdots & \vdots & \ddots & \vdots \\	
		0 & \cdots & 0 & 0 & 0 & \cdots & 0 & 0 & \cdots & 1
	\end{pmatrix}.$$
\end{example}

{\bf Acknowledgements:}
The author is deeply grateful to Prof. Sung-Yeon Kim for her interesting question, offering guidance, the invaluable discussions, constructive suggestions and detailed feedback throughout the development of this paper. She also wishes to thank Prof. Sui-Chung Ng's invaluable discussion to refine the paper. Additionally, thanks are due to Prof. Xiaojun Huang, Prof. Chenyang Xu and Prof. Dmitri Zaitsev for their insightful comments and valuable suggestions.
 
 \noindent{\bf Data availability statement.} Data sharing not applicable to this article as no datasets were generated or analysed during the current study.
 
 \noindent{\bf Ethical Statement}
 This work is purely theoretical mathematical research. No human participants, animal experiments, clinical data, or biological samples are involved in this manuscript. The author confirms that this manuscript is original research, has not been submitted simultaneously to other journals, and has approved the submission. All sources cited in this work are properly acknowledged. The author states that there is no conflict of interest to declare.

\end{document}